\newtheorem{theorem}{Theorem}
\newtheorem{lemma}[theorem]{Lemma}
\newenvironment{proof}{\noindent{\scshape Proof.}}{\hspace*{2mm} $\square$}
\newenvironment{proofof}[1]{\noindent{\scshape{Proof of #1.}}}{\hspace*{2mm}~$\square$}
\newcommand{\G}{\mathscr{G}}
\newcommand{\V}{\mathscr{V}}
\newcommand{\E}{\mathscr{E}}
\newcommand{\C}{\mathscr{C}}
\newcommand{\N}{\mathbb{N}}
\newcommand{\ind}{\mathbf{1}}
\newcommand{\n}{\hspace*{-6pt}}
\DeclareMathOperator{\card}{card \,}
\DeclareMathOperator{\bernoulli}{Bernoulli \,}
\DeclareMathOperator{\uniform}{Uniform \,}
\DeclareMathOperator{\binomial}{Binomial \,}
\DeclareMathOperator{\var}{Var}
\begin{document}

\begin{frontmatter}
\title     {Cluster size in bond percolation \\ on the Platonic solids}
\runtitle  {Cluster size in bond percolation on the Platonic solids}
\author    {Nicolas Lanchier\thanks{Nicolas Lanchier was partially supported by NSF grant CNS-2000792.} and Axel La Salle}
\runauthor {Nicolas Lanchier and Axel La Salle}
\address   {School of Mathematical and Statistical Sciences \\ Arizona State University \\ Tempe, AZ 85287, USA.}

\maketitle

\begin{abstract} \ \
 The main objective of this paper is to study the size of a typical cluster of bond percolation on each of the five Platonic solids:
 the tetrahedron, the cube, the octahedron, the dodecahedron and the icosahedron.
 Looking at the clusters from a dynamical point of view, i.e., comparing the clusters with birth processes, we first prove that the first and second
 moments of the cluster size are bounded by their counterparts in a certain branching process, which results in explicit upper bounds that are accurate
 when the density of open edges is small.
 Using that vertices surrounded by closed edges cannot be reached by an open path, we also derive upper bounds that, on the contrary, are accurate when
 the density of open edges is large.
 These upper bounds hold in fact for all regular graphs.
 Specializing in the five~Platonic solids, the exact value of (or lower bounds for) the first and second moments are obtained from the inclusion-exclusion
 principle and a computer program.
 The goal of our program is not to simulate the stochastic process but to compute exactly sums of integers that are too large to be computed by hand so
 these results are analytical, not numerical.
\end{abstract}

\begin{keyword}[class=AMS]
\kwd[Primary ]{60K35}
\end{keyword}

\begin{keyword}
\kwd{Bond percolation, Platonic solids, branching processes, inclusion-exclusion identity.}
\end{keyword}

\end{frontmatter}

%%%%%%%%%%%%%%%%%%%%%%%%%%%%%%%%%%%%%%%%%%%%%%%%%%%%%%%%%%%%%%%%%%%%%%%%%%%%%%%%%%%%%%%%%%%%%%%%%%%%%%%%%%%%%%%%%%%%%%%%%%%%%%%%%%%%%%%%%%%%%%%%%%%%%%%%%%%%%%%%%%%%%%%%%%%%

\section{Introduction}
\label{sec:intro}
 Bond percolation on a simple undirected graph is a collection of independent Bernoulli random variables with the same success probability~$p$ indexed by the
 set of edges, with the edges associated to a success being referred to as open edges, and the ones associated to a failure being referred to as closed edges.
 The open cluster containing a vertex~$x$ is the random subset of vertices that are connected to vertex~$x$ by a path of open edges.
 This stochastic model was introduced in~\cite{broadbent_hammersley_1957} to study the random spread of a fluid through a medium. \\
\indent Bond percolation is traditionally studied on infinite graphs such as the~$d$-dimensional integer lattice in which case the quantity of interest is
 the percolation probability, the probability that the cluster of open edges containing the origin is infinite.
 For bond percolation on integer lattices, it follows from Kolmogorov's zero-one law that the existence of an infinite cluster of open edges is an event that
 has probability either zero or one.
 This, together with a basic coupling argument, implies that there is a phase transition at a critical value~$p_c$ for the density of open edges from a
 subcritical phase where all the open clusters are almost surely finite to a supercritical phase where there is at least one infinite cluster of open edges.
 It is known that the cluster size decays exponentially in the subcritical phase~\cite{menshikov_1986} and that there is a unique infinite cluster of open
 edges called the infinite percolation cluster in the supercritical phase~\cite{aizenman_kesten_newman_1987a, aizenman_kesten_newman_1987b}.
 Using planar duality, coupling arguments and the uniqueness of the infinite percolation cluster, it can also be proved that the critical value in two
 dimensions is equal to one-half~\cite{kesten_1980}.
 We refer the interested reader to~\cite{grimmett_1999} for additional results about bond percolation on integer lattices,
 and to~\cite[chapter~13]{Lanchier_2017} for a brief overview. \\
\indent Bond percolation has also been studied on fairly general finite connected graphs~\cite{alon_benjamini_stacey_2004}.
 Important particular cases are the complete graph, in which case the set of open edges form the very popular~Erd\H{o}s-R\'{e}nyi random
 graph~\cite{erdos_renyi_1959}, as well as the hypercube~\cite{ajtai_komlos_szemeredi_1982}.
 Due to the finiteness of the underlying graph, all the open clusters are finite so whether there exists an infinite percolation cluster or not becomes
 irrelevant.
 Such processes, however, still exhibit a phase transition in the sense that, in the limit as the number of vertices goes to infinite, there is a giant
 component of open edges (an open cluster whose size scales like the size of the graph) if and only if~$p$ is exceeds a certain critical value.
 In particular, most of the works about bond percolation on finite graphs is concerned with asymptotics in the large graph limit. \\
\indent In contrast, the objective of this paper is to study~(the first and second moments of) the size distribution of a typical cluster of bond
 percolation on each of the five Platonic solids: the tetrahedron, the cube, the octahedron, the dodecahedron and the icosahedron.
 The motivation originates from our previous works~\cite{jevtic_lanchier_2020, jevtic_lanchier_lasalle_2020} that introduce a mathematical framework based
 on Poisson processes, random graphs equipped with a cost topology and bond percolation to model the aggregate loss resulting from cyber risks.
 Insurance premiums are based on the mean and variance of the aggregate loss which, in turn, can be easily expressed using the first and second
 moments of the size of the percolation clusters.
 Estimates for the size of the clusters are given in~\cite{jevtic_lanchier_2020} for the process on finite random trees
 and in~\cite{jevtic_lanchier_lasalle_2020} for the process on path, ring and star graphs.
 Even though our present work does not have any applications in the field of cyber insurance (because the Platonic solids are not realistic models
 of insurable networks), studying the size of percolation clusters on the Platonic solids is a very natural question in probability theory.

%%%%%%%%%%%%%%%%%%%%%%%%%%%%%%%%%%%%%%%%%%%%%%%%%%%%%%%%%%%%%%%%%%%%%%%%%%%%%%%%%%%%%%%%%%%%%%%%%%%%%%%%%%%%%%%%%%%%%%%%%%%%%%%%%%%%%%%%%%%%%%%%%%%%%%%%%%%%%%%%%%%%%%%%%%%%

\section{Main results}
\label{sec:results}
\begin{figure}[t!]
\centering
\scalebox{0.15}{\input{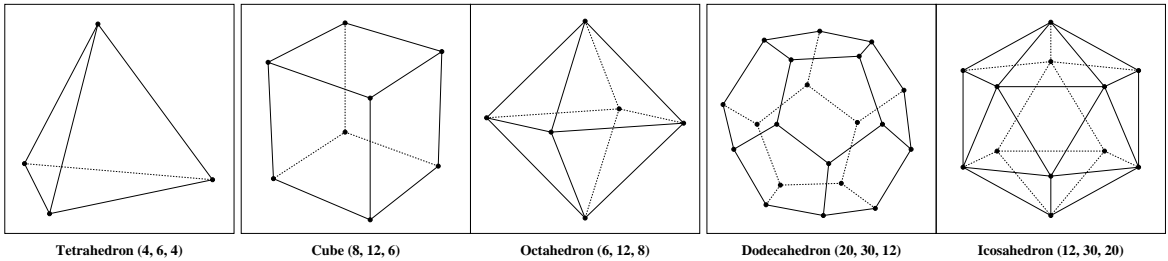}}
\caption{\upshape{Picture of the five Platonic solids.
                  The numbers between parentheses refer to the number of vertices, the number of edges, and the number of faces, respectively.
                  Note that the tetrahedron is dual to itself, the cube and the octahedron are dual to each other, and the dodecahedron and icosahedron
                  are dual to each other.}}
\label{fig:graphs}
\end{figure}
\begin{figure}[t!]
\centering
\scalebox{0.20}{\input{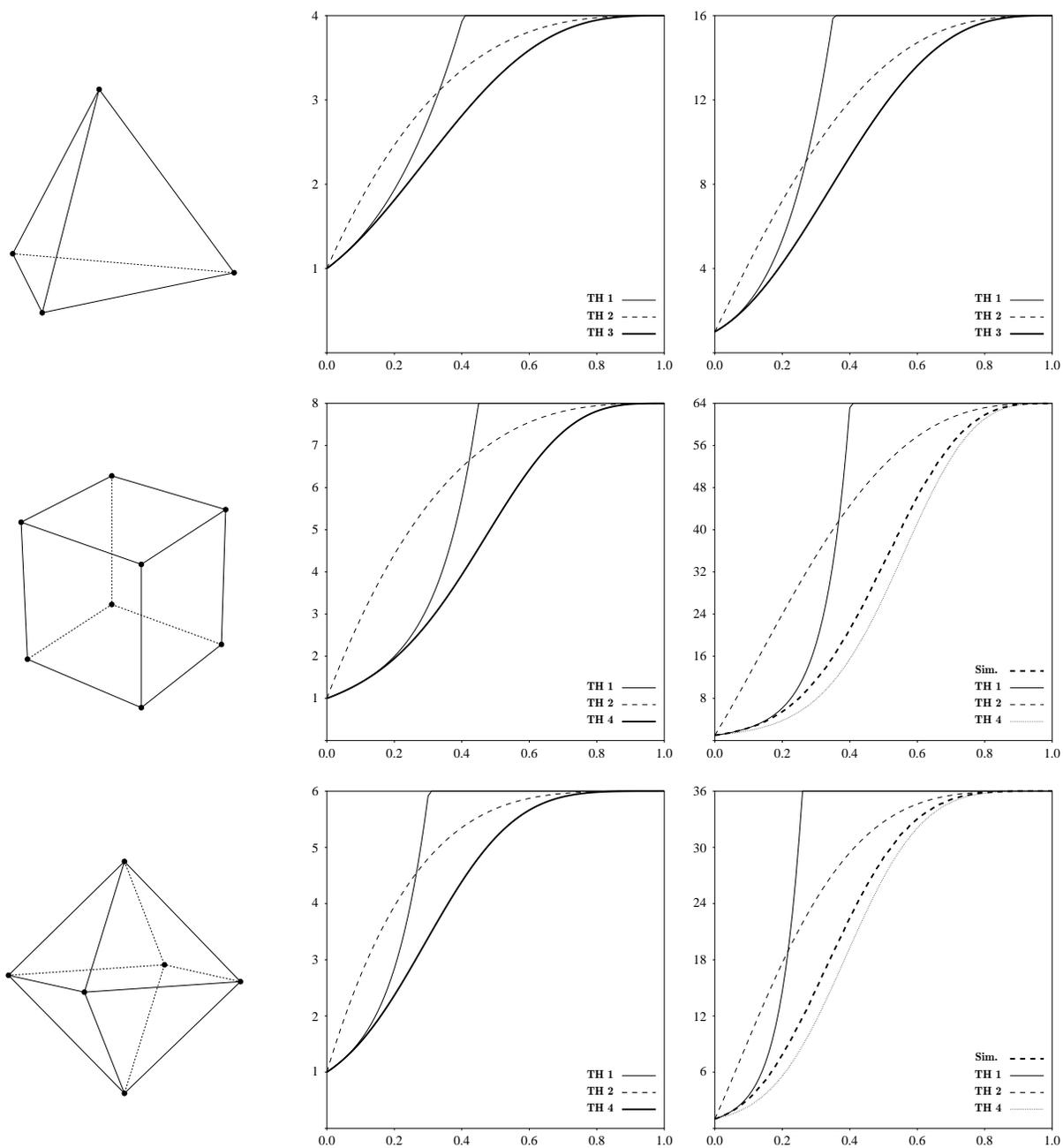}}
\caption{\upshape{First moment on the left and second moment on the right of the size distribution of bond percolation clusters on the tetrahedron (top),
                  cube (middle) and octahedron (bottom) as functions of the probability~$p$.
                  The thick solid lines show the exact expressions in~TH~\ref{th:04} and~\ref{th:06-08}, the thick dashed lines show the second moment obtained
                  from the average of one hundred thousand independent realizations of the process for various values of~$p$, and the other curves show the upper
                  bounds in TH~\ref{th:branching} and TH~\ref{th:plarge} for the appropriate values of~$D$ and~$N$.}}
\label{fig:moment-04-06-08}
\end{figure}
\begin{figure}[t!]
\centering
\scalebox{0.20}{\input{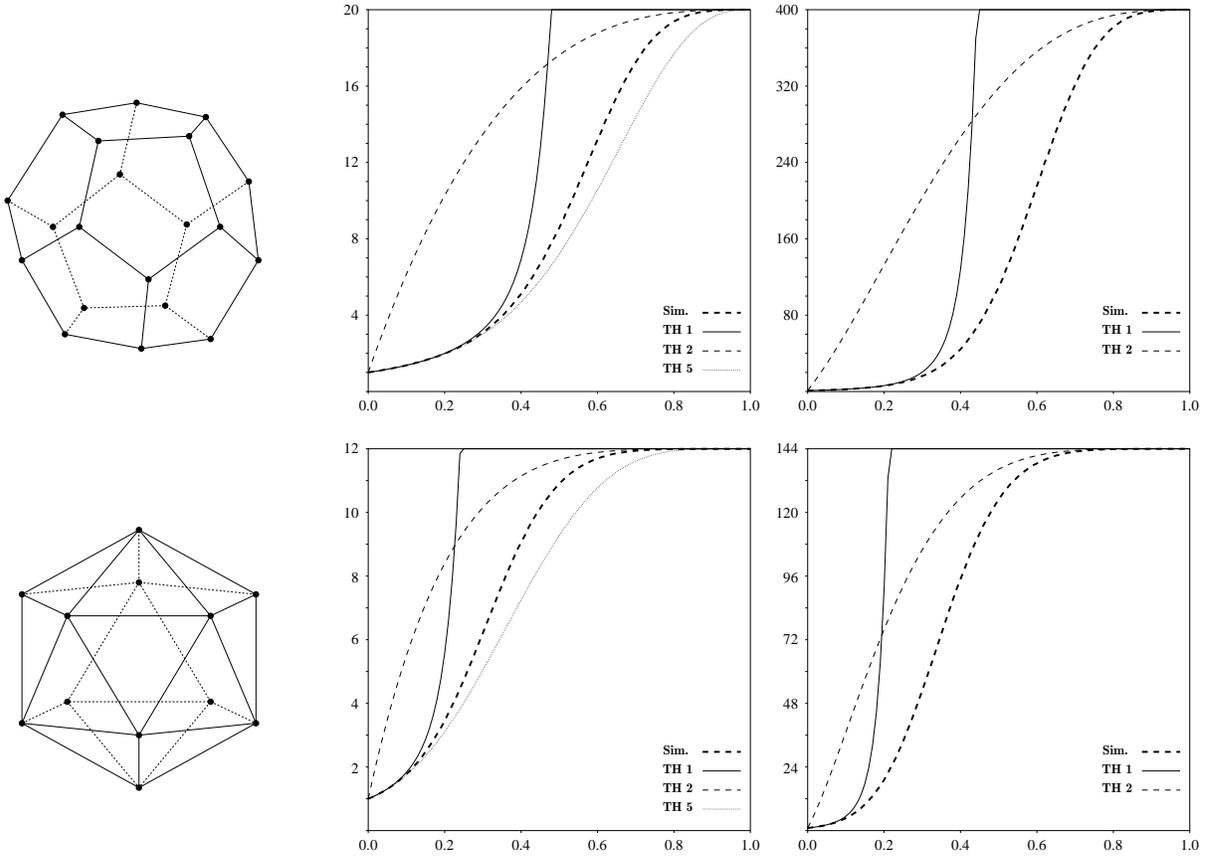}}
\caption{\upshape{First moment on the left and second moment on the right of the size distribution of bond percolation clusters on the dodecahedron (top)
                  and icosahedron (bottom) as functions of the probability~$p$.
                  The thick dashed lines show the second moment obtained from the average of one hundred thousand independent realizations of the process for various
                  values of~$p$ while the other curves show the upper bounds in TH~\ref{th:branching} and TH~\ref{th:plarge} for the appropriate values of
                  the degree~$D$ and the number of vertices~$N$, and the lower bounds in TH~\ref{th:12-20}.}}
\label{fig:moment-12-20}
\end{figure}
 Having a simple undirected graph~$\G = (\V, \E)$, let
 $$ x = \uniform (\V) \quad \hbox{and} \quad \zeta (e) = \bernoulli (p), e \in \E $$
 be a vertex chosen uniformly at random and a collection of Bernoulli random variables with the same success probability~$p$ on the set of edges.
 The edges with~$\zeta (e) = 1$ are said to be open while the edges with~$\zeta (e) = 0$ are said to be closed, and we let
 $$ \C_x = \{y \in \V : \hbox{there is a path of open edges connecting~$x$ and~$y$} \} $$
 be the percolation cluster containing~$x$.
 The main objective of this paper is to study the first and second moments of~$S = \card (\C_x)$ = the size of this percolation cluster when the graph~$\G$
 consists of each of the five Platonic solids depicted in Figure~\ref{fig:graphs}.
 Our first result gives upper bounds for the first and second moments of the cluster size that apply to all finite regular graphs and are not
 restricted to the Platonic solids.
 The idea is to think of the cluster~$\C_x$ as a dynamical object described by a birth process starting with one particle at~$x$ and in which particles
 give birth with probability~$p$ onto vacant adjacent vertices.
 The size of the cluster is equal to the ultimate number of particles in the birth process which, in turn, is dominated stochastically by the number of
 individuals up to generation~$\card (\V) - 1$ in a certain branching process.
 Computing the first and second moments of the number of individuals in the branching process gives the following upper bounds.
\begin{theorem} --
\label{th:branching}
 For every~$D$-regular graph with~$N$ vertices,
 $$ \begin{array}{rcl}
        E (S) & \n \leq \n & \displaystyle 1 + Dp \bigg(\frac{1 - \nu^R}{1 - \nu} \bigg) \vspace*{8pt} \\
      E (S^2) & \n \leq \n & \displaystyle \bigg(1 + Dp \bigg(\frac{1 - \nu^R}{1 - \nu} \bigg) \bigg)^2 + \frac{Dp (1 - p)}{(1 - \nu)^2} \bigg(\frac{(1 - \nu^R)(1 + \nu^{R + 1})}{1 - \nu} - 2R \nu^R \bigg) \end{array} $$
 where~$\nu = (D - 1) p$ and~$R = N - 1$.
\end{theorem}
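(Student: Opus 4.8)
The plan is to make the birth-process heuristic rigorous by a coupling and then reduce the theorem to two moment computations for an auxiliary branching process. Consider the Galton--Watson process started from a single individual (the root) in which the root produces a $\binomial(D,p)$ number of offspring while every individual in generation one or higher produces, independently, a $\binomial(D-1,p)$ number of offspring. Write $Z_n$ for the size of generation~$n$, $\nu = (D-1)p$, and $\widehat S = \sum_{n=0}^{R} Z_n$ for the total progeny up to generation $R = N-1$. The first step is to establish the stochastic domination $S \preceq \widehat S$ by coupling a breadth-first exploration of $\C_x$ with the branching process: when a vertex is processed we reveal its edges toward not-yet-explored vertices, of which there are at most $D$ for $x$ and at most $D-1$ for any other vertex (one edge having been used to reach it), each open independently with probability~$p$. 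Every newly discovered vertex of $\C_x$ is matched to a distinct offspring in the branching process, whereas vertices reached by more than one open path and edges pointing back into the explored set only lower the cluster count relative to the branching count, so $S \leq \widehat S$ on the coupled space. Since any vertex of $\C_x$ lies within graph distance $N-1$ of $x$, truncating at generation $R = N-1$ loses nothing, and hence $E(S) \leq E(\widehat S)$ and $E(S^2) \leq E(\widehat S^2)$.

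It then remains to compute the first two moments of $\widehat S$. Since $E(Z_0) = 1$ and $E(Z_n) = Dp\,\nu^{n-1}$ for $n \geq 1$, the first moment is the geometric sum $E(\widehat S) = 1 + Dp \sum_{n=0}^{R-1} \nu^n = 1 + Dp\,(1-\nu^R)/(1-\nu)$, which is the asserted bound on $E(S)$. For the second moment I would isolate the special role of the root by conditioning on $Z_1$: given $Z_1$, the subtrees hanging from the $Z_1$ first-generation individuals are i.i.d.\ ordinary Galton--Watson trees with offspring law $\binomial(D-1,p)$, so that $\widehat S = 1 + \sum_{i=1}^{Z_1} W_i$, where the $W_i$ are i.i.d.\ copies of the total size $W$ of such a tree restricted to its first $R$ generations and are independent of $Z_1 \sim \binomial(D,p)$. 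The random-sum variance identity then gives $\var(\widehat S) = Dp\,\var(W) + Dp(1-p)\,E(W)^2$, and because $E(\widehat S^2) = \var(\widehat S) + E(\widehat S)^2$ already produces the squared first-moment term in the statement, the whole problem reduces to evaluating $E(W)$ and $\var(W)$.

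For the subtree, let $Y_n$ denote its generation sizes, so $E(Y_n) = \nu^n$ and $E(W) = \sum_{n=0}^{R-1} \nu^n = (1-\nu^R)/(1-\nu)$, while the offspring variance is $\sigma^2 = (D-1)p(1-p) = \nu(1-p)$. I would then invoke the standard Galton--Watson identities $\var(Y_m) = \sigma^2 \nu^{m-1}(1-\nu^m)/(1-\nu)$ for $m \geq 1$ (with $\var(Y_0) = 0$) together with $\cov(Y_m, Y_n) = \nu^{n-m}\var(Y_m)$ for $m \leq n$, the latter following from $E(Y_n \mid \mathscr{F}_m) = \nu^{n-m} Y_m$, and expand
$$ \var(W) = \sum_{m=0}^{R-1} \var(Y_m) + 2 \sum_{0 \leq m < n \leq R-1} \nu^{n-m}\var(Y_m). $$
The inner sum over $n$ equals $(\nu - \nu^{R-m})/(1-\nu)$, and after multiplying by $\var(Y_m)$ the product expands into the four monomials $\nu^m$, $\nu^{R-1}$, $\nu^{2m}$ and $\nu^{R+m-1}$ in $m$; three of these are geometric, while the constant-in-$m$ piece $-\nu^{R-1}$, summed over $m$, contributes the term proportional to $R\nu^{R}$.

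The main obstacle is precisely this last bookkeeping step: carrying out the geometric summations, tracking the linear-in-$R$ contribution that becomes the $-2R\nu^R$ in the statement, factoring out the common prefactor $Dp(1-p)/(1-\nu)^2$ (using $Dp\,\var(W) = Dp(1-p)\,\nu \cdot (\text{geometric sum})$ and $Dp(1-p)\,E(W)^2 = Dp(1-p)(1-\nu^R)^2/(1-\nu)^2$), and verifying that everything collapses to
$$ \frac{Dp(1-p)}{(1-\nu)^2}\bigg( \frac{(1-\nu^R)(1+\nu^{R+1})}{1-\nu} - 2R\nu^R \bigg). $$
This computation is entirely mechanical but error-prone, so I would organize it by collecting powers of $\nu$ and the $R\nu^R$ term separately. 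All identities are stated for $\nu \neq 1$, the boundary case $\nu = 1$ being recovered by continuity in~$p$.
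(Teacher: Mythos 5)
Your proposal is correct and follows essentially the same route as the paper: the same branching process (root offspring $\binomial(D,p)$, all others $\binomial(D-1,p)$) dominating the cluster exploration truncated at generation $N-1$, and the same decomposition of the total progeny by conditioning on the first generation together with the random-sum variance identity. The only difference is that the paper imports the mean and variance of the truncated total progeny from Theorem~2 of~\cite{jevtic_lanchier_2020}, whereas you re-derive them from the standard identities $\var(Y_m)=\sigma^2\nu^{m-1}(1-\nu^m)/(1-\nu)$ and $\cov(Y_m,Y_n)=\nu^{n-m}\var(Y_m)$; carrying out the summations you set up does reproduce the paper's expression, so this is a self-contained variant of the same argument rather than a different proof.
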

 Taking~$D$ and~$N$ in the theorem to be the degree and the number of vertices in each of the Platonic solids, we get the solid curves
 in Figures~\ref{fig:moment-04-06-08} and~\ref{fig:moment-12-20}.
 Note that these upper bounds are only accurate for~$p$ small.
 To have upper bounds that are accurate for~$p$ large, we simply use that a vertex~$y \neq x$ cannot be in the percolation cluster~$\C_x$ when all the edges
 incident to~$x$ are closed.
 This gives the following result that again applies to all finite regular graphs.
\begin{theorem} --
\label{th:plarge}
 For every~$D$-regular graph with~$N$ vertices,
 $$ \begin{array}{rcl}
        E (S) & \n \leq \n & N - (N - 1)(1 - p)^D \vspace*{4pt} \\
      E (S^2) & \n \leq \n & N^2 - (N - 1)(2N - 1)(1 - p)^D + (N - 1)(N - 2)(1 - p)^{2D - 1}. \end{array} $$
\end{theorem}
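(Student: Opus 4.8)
The plan is to bound the cluster size using the elementary observation announced just before the statement: for a vertex $y \neq x$ to belong to $\C_x$, an open path must reach $y$, which in particular forces at least one of the $D$ edges incident to $y$ to be open. Writing $S = \card (\C_x) = \sum_{y \in \V} \ind \{y \in \C_x\}$ and conditioning on the root $x$, I would invoke linearity of expectation. Since $x \in \C_x$ always while each $y \neq x$ satisfies $\{y \in \C_x\} \subseteq \{y \hbox{ is not isolated}\}$, and since $y$ is isolated precisely when its $D$ incident edges are all closed, we get $P (y \in \C_x \mid x) \leq 1 - (1 - p)^D$. Summing the $N - 1$ such terms and adding the contribution $1$ from $x$ itself yields $E (S \mid x) \leq 1 + (N - 1)(1 - (1 - p)^D) = N - (N - 1)(1 - p)^D$, and because this bound does not depend on $x$, averaging over the uniformly chosen root gives the first inequality.

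For the second moment, I would expand $S^2 = \sum_{y, z \in \V} \ind \{y \in \C_x\} \ind \{z \in \C_x\}$ and take expectations, again conditioning on $x$. The ordered pairs $(y, z)$ split naturally into four groups: the single pair $y = z = x$; the $2 (N - 1)$ pairs in which exactly one coordinate equals $x$; the $N - 1$ diagonal pairs $y = z \neq x$; and the $(N - 1)(N - 2)$ pairs with $y \neq z$ and both different from $x$. The first group contributes exactly $1$. Each pair in the second and third groups contributes $P (\hbox{the non-}x\hbox{ vertex} \in \C_x \mid x) \leq 1 - (1 - p)^D$, using that $x \in \C_x$ almost surely and that $\ind^2 = \ind$ on the diagonal, for a combined contribution of $3 (N - 1)$ such terms.

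The crux is the last group. Here I would bound $P (y \in \C_x,\, z \in \C_x \mid x)$ by the probability that neither $y$ nor $z$ is isolated and evaluate the latter by inclusion-exclusion as $1 - 2 (1 - p)^D + P (y, z \hbox{ both isolated})$. The event that $y$ and $z$ are both isolated requires every edge incident to either of them to be closed; these edges number $2D$ when $y$ and $z$ are not adjacent and $2D - 1$ when they are adjacent, since then they share the edge $\{y, z\}$. Because $1 - p \leq 1$ gives $(1 - p)^{2D} \leq (1 - p)^{2D - 1}$, in both cases $P (y, z \hbox{ both isolated}) \leq (1 - p)^{2D - 1}$, so every pair in the last group contributes at most $1 - 2 (1 - p)^D + (1 - p)^{2D - 1}$. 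This uniform treatment of adjacent and non-adjacent pairs is the main point to get right: it is exactly what allows the final bound to depend only on $D$ and $N$ rather than on the finer structure of the graph, and I expect it to be the one spot where a careless edge count would break the argument.

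Finally I would collect the four contributions to obtain the conditional bound $E (S^2 \mid x) \leq 1 + 3 (N - 1)(1 - (1 - p)^D) + (N - 1)(N - 2)(1 - 2 (1 - p)^D + (1 - p)^{2D - 1})$ and simplify. A short computation shows that the terms free of $(1 - p)^D$ sum to $N^2$ and that the coefficient of $(1 - p)^D$ collapses to $-(N - 1)(2N - 1)$, which reproduces the claimed expression. As with the first moment, this bound does not involve $x$, so averaging over the uniformly chosen root leaves it unchanged and completes the proof.
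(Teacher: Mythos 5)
Your proposal is correct and follows essentially the same route as the paper: bound $P(y\in\C_x)$ by the probability that $y$ is not isolated, count the ordered pairs $(y,z)$ with $\card\{x,y,z\}=2$ (there are $3(N-1)$) and with $\card\{x,y,z\}=3$ (there are $(N-1)(N-2)$), and use inclusion-exclusion on the two isolation events with the bound $P(B_y\cap B_z)\leq(1-p)^{2D-1}$ coming from the fact that $y$ and $z$ share at most one edge. Your explicit attention to the adjacent versus non-adjacent case is exactly the point the paper handles with the remark that there is at most one edge between any two distinct vertices, and the final algebra matches.
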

 Taking~$D$ and~$N$ in the theorem to be the degree and the number of vertices in each of the Platonic solids, we get the dashed curves
 in Figures~\ref{fig:moment-04-06-08} and~\ref{fig:moment-12-20}. \\
\indent Our last results are specific to the five Platonic solids and we denote by~$S_f$ the size of a percolation cluster on the solid with~$f$ faces.
 To explain these results, we first observe that the mean cluster size can be easily expressed using the probability that each vertex belongs to the open
 cluster~$\C_x$ which, in turn, is equal to the probability that at least one of the self-avoiding paths connecting~$x$ to this vertex is open.
 In particular, identifying all the self-avoiding paths connecting~$x$ to any other vertex and using the inclusion-exclusion identity give an exact
 expression for the first moment.
 The same holds for the second moment looking instead at all the pairs of paths connecting~$x$ to two other vertices.
 This approach also shows that the first and second moments of the cluster size are polynomials in~$p$ with integer coefficients and degree (at most) the
 total number of edges so, to state our next results and shorten the notation, we let
 $$ P_k = (p^0, p^1, p^2, \ldots, p^k)^T \quad \hbox{for all} \quad k \in \N. $$
 The main difficulties following this strategy is to identify all the self-avoiding paths and compute the probability that any sub-collection of
 paths are simultaneously open.
 Recall that, when dealing with~$n$ events, the inclusion-exclusion identity consists of a sum of~$2^n - 1$ terms.
 For the tetrahedron, the moments of the cluster size are polynomials with degree six, and there are five self-avoiding paths connecting any two vertices,
 and ten pairs of self-avoiding paths connecting any three vertices, therefore the number of terms in the inclusion-exclusion identity are
 $$ 2^5 - 1 = 31 \ \ \hbox{for the first moment} \quad \hbox{and} \quad 2^{10} - 1 = 1,023 \ \ \hbox{for the second moment}. $$
 In particular, we compute the first moment by hand whereas for the second moment we rely on a computer program that returns the exact value of
 the (seven) coefficients.
\begin{theorem}[tetrahedron] --
\label{th:04}
 For all~$p \in (0, 1)$,
 $$ E (S_4) = (1, 3, 6, 0, -21, 21, -6) \cdot P_6 \quad \hbox{and} \quad E (S_4^2) = (1, 9, 36, 30, -171, 153, -42) \cdot P_6. $$
\end{theorem}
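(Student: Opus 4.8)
The tetrahedron is the complete graph $K_4$, whose automorphism group is the full symmetric group on its four vertices; this group acts transitively on vertices and its vertex stabilizers act transitively on the remaining three vertices and on the ordered pairs among them. Writing $S_4 = \sum_{v \in \V} \ind (v \in \C_x)$ and combining linearity of expectation with these symmetries (which let us fix $x$ and treat the other three vertices as interchangeable), the plan is to reduce both moments to two connection probabilities: $q = P (y \in \C_x)$ for a fixed $y \neq x$, and $r = P (y \in \C_x, z \in \C_x)$ for fixed distinct $y, z \neq x$. Splitting the double sum according to how many of the two indices equal $x$ gives
$$ E (S_4) = 1 + 3q \quad \hbox{and} \quad E (S_4^2) = 1 + 9q + 6r, $$
so it suffices to compute $q$ and $r$ as polynomials in $p$.

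To compute $q$, I would list the five self-avoiding paths from $x$ to $y$ in $K_4$ (the direct edge, the two two-edge paths through a single other vertex, and the two three-edge paths through both other vertices) and record the edge set of each. A path is open exactly when all its edges are open, so the intersection of the events indexed by a set $I$ of paths is the event that every edge in the union of their edge sets is open, which has probability $p^{e(I)}$, where $e(I)$ denotes the number of distinct edges in that union. Inclusion–exclusion then yields
$$ q = \sum_{\emptyset \neq I \subseteq \{1, \ldots, 5\}} (-1)^{|I| + 1} \, p^{e(I)}, $$
a sum of $2^5 - 1 = 31$ terms. Grouping the subsets by their edge count $e(I)$ and summing the signs collapses this into a degree-six polynomial, which I would evaluate by hand to obtain $q$ and hence the stated expression for $E (S_4)$.

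For $r$ the same device applies, now to the ten self-avoiding paths joining $x$ to $y$ and $x$ to $z$. The union of the corresponding ten events is exactly $\{y \in \C_x\} \cup \{z \in \C_x\}$, so inclusion–exclusion expresses its probability as a signed sum of $2^{10} - 1 = 1023$ monomials $p^{e(I)}$, after which $r = 2q - P (\{y \in \C_x\} \cup \{z \in \C_x\})$ recovers the joint probability. Each term again requires only the size of a union of edge sets, so the whole computation is exact integer arithmetic; it is precisely this bookkeeping over $1023$ subsets that is delegated to the computer program, which returns the seven integer coefficients. Assembling $E (S_4^2) = 1 + 9q + 6r$ then produces the claimed polynomial.

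The conceptual content is light: once the symmetry reduction to $q$ and $r$ is in place, everything is inclusion–exclusion. The real obstacle is combinatorial accuracy, namely enumerating the self-avoiding paths correctly and, for each of the $31$ (respectively $1023$) subsets, counting the distinct edges in the union of the selected paths before summing the signs. For the first moment this is a finite hand computation, but for the second moment the number of subsets makes direct evaluation error-prone, which is exactly why the \emph{exactness} of the program's integer arithmetic — as opposed to a numerical approximation — is what the argument relies on.
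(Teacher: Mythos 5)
Your proposal is correct and follows the paper's strategy almost exactly: the same symmetry reduction to $E(S_4) = 1 + 3q$ and $E(S_4^2) = 1 + 9q + 6r$ with $q = P(x \leftrightarrow y)$ and $r = P(x \leftrightarrow y,\, x \leftrightarrow z)$, the same hand computation of $q$ by inclusion--exclusion over the five self-avoiding paths ($31$ terms, yielding $q = p + 2p^2 - 7p^4 + 7p^5 - 2p^6$), and the same delegation of the $2^{10} - 1 = 1023$-term sum to exact integer arithmetic. The one real difference is how you extract $r$. The paper expands the \emph{intersection} event directly: it lists ten configurations, each a pair consisting of a self-avoiding path $x \to y$ together with a self-avoiding path $x \to z$ (the second table in Figure~\ref{fig:path-468}), observes that $\{x \leftrightarrow y\} \cap \{x \leftrightarrow z\}$ is the union of the ten events that such a configuration is fully open, and runs inclusion--exclusion on those, so its program outputs the coefficients of $r$ itself, namely $(0,0,3,5,-18,15,-4)$. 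You instead run inclusion--exclusion on the ten \emph{individual} paths (five to $y$, five to $z$), whose union of events is $\{x \leftrightarrow y\} \cup \{x \leftrightarrow z\}$, and then recover the intersection from $r = 2q - P(\{x \leftrightarrow y\} \cup \{x \leftrightarrow z\})$. Both routes cost $1023$ terms and both are exact; the paper's version requires identifying the reduced list of ten joint configurations (out of the $5 \times 5$ ordered pairs of paths, only ten distinct unions matter), while yours sidesteps that enumeration at the price of one extra linear combination at the end. Your identity for $r$ is correct and the resulting polynomials agree with the theorem, so the argument goes through.
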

 The cube and the octahedron both have twelve edges.
 There are respectively
\begin{itemize}
 \item 15, 16, 18 self-avoiding paths connecting two vertices at distance 1, 2, 3 on the cube, \vspace*{4pt}
 \item 26, 28 self-avoiding paths connecting two vertices at distance 1, 2 on the octahedron.
\end{itemize}
 In particular, the first moment of the cluster size cannot be computed by hand for the cube and the octahedron because the number of terms in the
 inclusion-exclusion identity ranges from tens of thousands to hundreds of millions.
 Identifying all these paths and using the same computer program as before, we get the following theorem.
\begin{theorem}[cube and octahedron] --
\label{th:06-08}
 For all~$p \in (0, 1)$,
 $$ \begin{array}{rcl}
      E (S_6) & \n = \n & (1, 3, 6, 12, 9, 12, -81, - 75, 69, 473, - 777, 447, -91) \cdot P_{12} \vspace*{4pt} \\
      E (S_8) & \n = \n & (1, 4, 12, 20, -14, -196, 12, 1316, -2815, 2824, -1564, 464, -58) \cdot P_{12}. \end{array} $$
\end{theorem}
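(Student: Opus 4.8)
The plan is to compute the first moments $E (S_6)$ and $E (S_8)$ by the inclusion-exclusion strategy sketched before the statement, exploiting the symmetry of the two solids to keep the bookkeeping manageable. By linearity of expectation, fixing a reference vertex $x$ gives
$$ E (S) = \sum_{y \in \V} P (y \in \C_x), $$
and since the cube graph and the octahedron graph are both distance-transitive, the summand $P (y \in \C_x)$ depends only on the graph distance $d (x, y)$. Writing $q_d$ for this common value on the distance-$d$ class, the cube (with $N = 8$ vertices distributed as $1, 3, 3, 1$ over distances $0, 1, 2, 3$) yields
$$ E (S_6) = 1 + 3 \, q_1 + 3 \, q_2 + q_3, $$
while the octahedron (with $N = 6$ vertices distributed as $1, 4, 1$ over distances $0, 1, 2$) yields $E (S_8) = 1 + 4 \, q_1 + q_2$.

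Next I would express each $q_d$ through the inclusion-exclusion identity. A vertex $y$ belongs to $\C_x$ exactly when at least one self-avoiding path from $x$ to $y$ is open, so letting $\pi_1, \dots, \pi_n$ enumerate these paths and $A_i$ be the event that all edges of $\pi_i$ are open,
$$ q_d = P \Big( \bigcup_{i = 1}^n A_i \Big) = \sum_{\emptyset \neq I \subseteq \{1, \dots, n\}} (-1)^{|I| + 1} \, P \Big( \bigcap_{i \in I} A_i \Big). $$
Since edges are open independently with probability $p$, the event $\bigcap_{i \in I} A_i$ requires exactly the edges lying on some $\pi_i$ with $i \in I$ to be open, so its probability is $p^{e (I)}$, where $e (I)$ is the number of distinct edges in the union $\bigcup_{i \in I} \pi_i$. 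Each $q_d$ is therefore a polynomial in $p$ with integer coefficients and degree at most $12$, the number of edges of the solid, and the whole moment is assembled by adding, for every nonempty $I$, the signed unit $(-1)^{|I| + 1}$ to the coefficient of $p^{e (I)}$.

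The main obstacle is purely computational. On the octahedron the distance-$1$ and distance-$2$ classes carry $26$ and $28$ self-avoiding paths, so the corresponding sums run over $2^{26} - 1$ and $2^{28} - 1$ index sets — on the order of hundreds of millions of terms — which rules out evaluation by hand. The two decisive steps are thus (i) enumerating the self-avoiding $x$-$y$ paths exhaustively and without repetition, since one missing or duplicated path would corrupt every coefficient, and (ii) computing, for each of the $2^n - 1$ subsets $I$, the edge-union cardinality $e (I)$ and accumulating the signed unit into the running integer coefficient of $p^{e (I)}$. I would delegate both to the computer program, which carries out exact integer arithmetic rather than simulation. Finally, two consistency checks validate the output: setting $p = 0$ must return $E (S) = 1$, which fixes the constant coefficient at $1$, and setting $p = 1$ must return $E (S) = N$, so the coefficients must sum to $8$ on the cube and to $6$ on the octahedron — both of which the stated vectors satisfy.
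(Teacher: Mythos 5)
Your proposal is correct and follows essentially the same route as the paper: decompose $E(S)$ over distance classes weighted by the class sizes $N_s$, expand each connection probability via inclusion-exclusion over the self-avoiding paths (with $P(\bigcap_{i\in I}A_i)=p^{e(I)}$ by edge independence), and delegate the exact signed integer accumulation over all $2^n-1$ subsets to a computer program. The consistency checks at $p=0$ and $p=1$ are a useful addition not present in the paper, but the substance of the argument is identical.
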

 The dodecahedron and the icosahedron both have thirty edges.
 For these two solids, even writing down all the self-avoiding paths connecting two vertices is beyond human capability so we only focus on the paths
 of length at most five for the dodecahedron and of length at most three for the icosahedron.
 Using that two vertices are in the same open cluster if (but not only if) at least one of the paths is open, together with the inclusion-exclusion
 identity and our computer program, we get the following lower bounds for the mean cluster size.
\begin{theorem}[dodecahedron and icosahedron] --
\label{th:12-20}
 For all~$p \in (0, 1)$,
 $$ \begin{array}{rcl}
      E (S_{12}) & \n \geq \n & (1, 3, 6, 12, 24, 30, -24, -30, -36, 3, -6, 42, \vspace*{2pt} \\ && \hspace*{20pt}
                                -6, 18, -21, 14, 0, -6, -9, 0, 0, 6, 0, 0, -1, 0, 0, 0, 0, 0, 0) \cdot P_{30} \vspace*{4pt} \\
      E (S_{20}) & \n \geq \n & (1, 5, 20, 60, -90, -75, 0, 190, -10, -80, -60, 10, \vspace*{2pt} \\ && \hspace*{20pt}
                                -5, 120, -35, -88, 35, 40, -35, 10, -1, 0, 0, 0, 0, 0, 0, 0, 0, 0, 0) \cdot P_{30}. \end{array} $$
\end{theorem}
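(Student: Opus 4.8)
The plan is to turn the mean cluster size into a sum of two-point connection probabilities and then to bound each such probability from below by an inclusion--exclusion expansion over a computationally tractable family of short self-avoiding paths. First I would use that both solids are vertex-transitive, so the law of $\card (\C_x)$ does not depend on the choice of $x$; fixing any vertex $x$ and writing $x \leftrightarrow y$ for the event $y \in \C_x$, this gives
$$ E (S_f) = \sum_{y \in \V} P (x \leftrightarrow y) = \sum_{k \geq 0} n_k \, q_k (p), $$
where the last equality uses that both solids are in fact distance-transitive, so that $P (x \leftrightarrow y)$ depends only on the graph distance $k$ between $x$ and $y$; here $n_k$ is the number of vertices at distance $k$ from $x$ and $q_k (p)$ is their common connection probability. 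Concretely $(n_0, n_1, n_2, n_3) = (1, 5, 5, 1)$ for the icosahedron and $(n_0, \ldots, n_5) = (1, 3, 6, 6, 3, 1)$ for the dodecahedron.

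For a fixed target vertex $y$, let $\mathcal{P}_y$ be the collection of self-avoiding paths from $x$ to $y$ of length at most three for the icosahedron and at most five for the dodecahedron, let $A_\pi$ be the event that every edge of the path $\pi$ is open, and let $e (I)$ denote the number of distinct edges lying on at least one path of a subfamily $I \subseteq \mathcal{P}_y$. Since $x \leftrightarrow y$ occurs whenever some $A_\pi$ occurs, monotonicity of the probability of a union together with the inclusion--exclusion identity yields
$$ q_k (p) \geq P \bigg( \bigcup_{\pi \in \mathcal{P}_y} A_\pi \bigg) = \sum_{\emptyset \neq I \subseteq \mathcal{P}_y} (-1)^{|I| + 1} \, p^{\, e (I)}, $$
where the last step uses that $\bigcap_{\pi \in I} A_\pi$ is exactly the event that the $e (I)$ edges carried by the paths in $I$ are all open, an event of probability $p^{\, e (I)}$ by independence. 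Each term contributes $\pm 1$ to the coefficient of a single power of $p$, so after summing the bound over the distance classes one obtains a polynomial in $p$ with integer coefficients; since only short paths are used, the edge-unions stay small and the high-order coefficients vanish, which is why the stated vectors are padded with zeros up to the nominal degree $30$ matching $P_{30}$.

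The hard part is entirely computational. Even after restricting to short paths, the family $\mathcal{P}_y$ is large enough that the inclusion--exclusion sum ranges over $2^{|\mathcal{P}_y|} - 1$ subfamilies, far too many to tabulate by hand, and for each subfamily $I$ one must compute the cardinality $e (I)$ of the corresponding edge-union, which is a nontrivial bookkeeping problem. This is exactly the task delegated to the computer program: it enumerates the paths $\mathcal{P}_y$ for each distance class, evaluates the alternating sum above in exact integer arithmetic, and returns the thirty-one coefficients of each lower bound. The resulting identities are therefore analytical rather than numerical.
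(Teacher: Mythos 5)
Your proposal is correct and follows essentially the same route as the paper: decompose $E(S_f)$ over distance classes via \eqref{eq:plarge-1} and \eqref{eq:inclusion-exclusion-1}, bound each connection probability from below by the inclusion--exclusion sum over the self-avoiding paths of length at most the radius (the inclusion of events in \eqref{eq:inclusion-exclusion-6}), and delegate the exact integer bookkeeping of the edge-unions to a computer program; your values of $N_s$ and the path-length cutoffs match those used in the paper.
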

 The first and second moments in Theorem~\ref{th:04} and the first moments in Theorem~\ref{th:06-08} are represented
 by the thick solid curves in Figure~\ref{fig:moment-04-06-08}.
 These curves fit perfectly with numerical solutions obtained from one hundred thousands independent realizations of the percolation process.
 The lower bounds for the first moments in Theorem~\ref{th:12-20} are represented by the dotted curves in Figure~\ref{fig:moment-12-20}.

%%%%%%%%%%%%%%%%%%%%%%%%%%%%%%%%%%%%%%%%%%%%%%%%%%%%%%%%%%%%%%%%%%%%%%%%%%%%%%%%%%%%%%%%%%%%%%%%%%%%%%%%%%%%%%%%%%%%%%%%%%%%%%%%%%%%%%%%%%%%%%%%%%%%%%%%%%%%%%%%%%%%%%%%%%%%

\section{Proof of Theorem~\ref{th:branching} (branching processes)}
\label{sec:branching}
 This section is devoted to the proof of Theorem~\ref{th:branching}.
 Though our focus is on the Platonic solids, we recall that the theorem applies to every finite~$D$-regular graph~$\G = (\V, \E)$.
 The basic idea of the proof is to use a coupling argument to compare the size of the percolation cluster starting at a given vertex with the number of
 individuals in a certain branching process. \vspace*{4pt}

% % % % % % % % % % % % % % % % % % % % % % % % % % % % % % % % % % % % % % % % % % % % % % % % % % % % % % % % % % % % % % % % % % % % % % % % % % % % % % % % % % % % % %

\noindent{\bf Birth process.}
 Having a vertex~$x \in \V$ and a realization of bond percolation with parameter~$p$ on the graph, we consider the following discrete-time
 birth process~$(\xi_n)$.
 The state at time~$n$ is a spatial configuration of particles on the vertices:
 $$ \xi_n \subset \V \quad \hbox{where} \quad \xi_n = \hbox{set of vertices occupied by a particle at time~$n$}. $$
 The process starts at generation~0 with a particle at~$x$, i.e., $\xi_0 = \{x \}$.
\begin{itemize}
 \item For each vertex~$y$ adjacent to vertex~$x$, the particle at~$x$ gives birth to a particle sent to vertex~$y$ if and only if edge~$(x, y)$ is open.
\end{itemize}
 The children of the particle at~$x$ are called the particles of generation~1.
 Assume that the process has been defined up to generation~$n > 0$, and let
 $$ Y_n = \card (\xi_n \setminus \xi_{n - 1}) $$
 be the number of particles of that generation.
 Label arbitrarily~$1, 2, \ldots, Y_n$ the particles of generation~$n$ and let~$x_{n, 1}, x_{n, 2}, \ldots, x_{n, Y_n}$ be their locations so that
 $$ \xi_n \setminus \xi_{n - 1} = \{x_{n, 1}, x_{n, 2}, \ldots, x_{n, Y_n} \}. $$
 Then, generation~$n + 1$ is defined as follows:
\begin{itemize}
 \item For each vertex~$y$ adjacent to~$x_{n, 1}$, the first particle of generation~$n$ gives birth to a particle sent to~$y$ if and only if~$y$
       is empty and edge~$(x_{n, 1}, y)$ is open. \vspace*{4pt}
 \item For each vertex~$y$ adjacent to~$x_{n, 2}$, the second particle of generation~$n$ gives birth to a particle sent to~$y$ if and only if~$y$
       is empty and edge~$(x_{n, 2}, y)$ is open. \vspace*{4pt}
 \item $\cdots$ \vspace*{4pt}
 \item For each vertex~$y$ adjacent to~$x_{n, Y_n}$, the~$Y_n$th particle of generation~$n$ gives birth to a particle sent to~$y$ if and only if~$y$
       is empty and edge~$(x_{n, Y_n}, y)$ is open.
\end{itemize}
 Note that two particles~$i$ and~$j$ with~$i < j$ might share a common neighbor~$y$ in which case a child of particle~$i$ sent to~$y$ prevents
 particle~$j$ from giving birth onto~$y$.
 For a construction of the birth process from a realization of bond percolation on the dodecahedron, we refer to Figure~\ref{fig:birth}.
 The process is designed so that particles ultimately occupy the open cluster starting at~$x$.
 In particular, the total number of particles equals the cluster size, as proved in the next lemma.
\begin{figure}[t!]
\centering
\scalebox{0.50}{\input{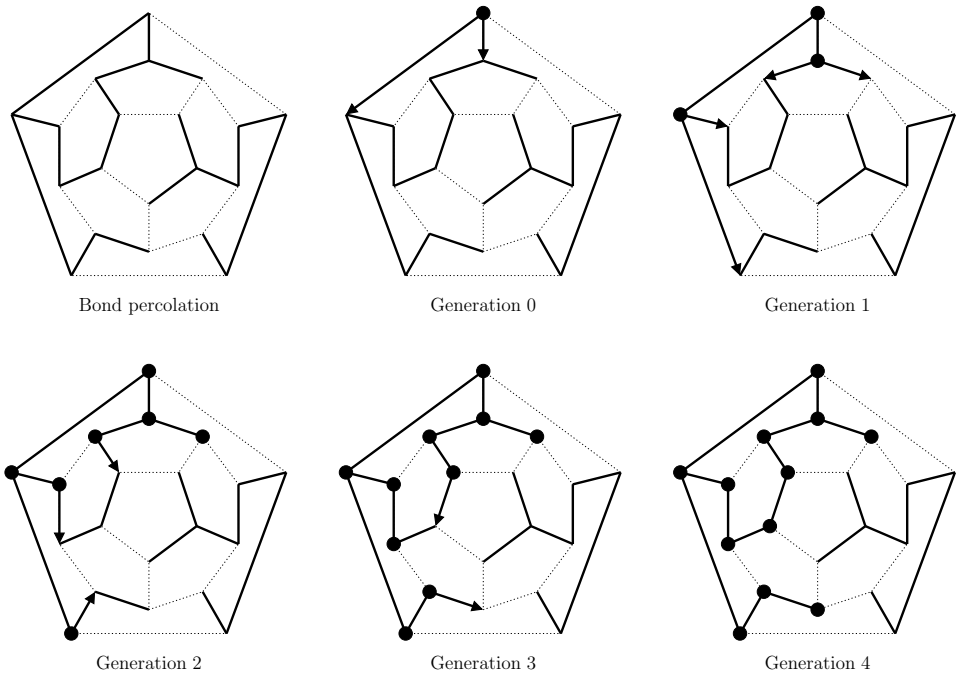}}
\caption{\upshape{Example of a construction of the birth process from a realization of bond percolation (top left picture) on the dodecahedron.
                  The thick lines represent the open edges, the black dots represent the vertices occupied by a particle at each generation, an the
                  arrows represent the birth events, from parent to children.}}
\label{fig:birth}
\end{figure}
\begin{lemma} --
\label{lem:wet-particle}
 The cluster size is given by
 $$ S = \card (\C_x) = \card (\xi_{N - 1}) = Y_0 + Y_1 + \cdots + Y_{N - 1} \quad \hbox{where} \quad N = \card (\V). $$
\end{lemma}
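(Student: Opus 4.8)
The plan is to establish the set identity $\xi_{N-1} = \C_x$ and then read off the counting formula from the monotonicity of the birth process. First I would record the two structural facts that drive everything: the sequence $(\xi_n)$ is nondecreasing, since a particle once born is never removed, and because $\V$ is finite the configuration stabilizes after finitely many generations. Setting $\xi_{-1} := \varnothing$ so that $Y_0 = \card(\xi_0) = 1$, the sets $\xi_n \setminus \xi_{n-1}$ are disjoint and exhaust $\xi_{N-1}$, whence $\card(\xi_{N-1}) = \sum_{n=0}^{N-1} Y_n$ by telescoping. Thus the whole lemma reduces to proving $\xi_{N-1} = \C_x$.

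For the inclusion $\xi_{N-1} \subseteq \C_x$ I would argue by induction on the generation that every occupied vertex lies in $\C_x$. The vertex $x \in \xi_0$ belongs to $\C_x$ trivially. If a particle at generation $n$ sits at a vertex $z \in \C_x$ and gives birth onto an adjacent vertex $y$, then by construction the edge $(z,y)$ is open, so concatenating an open path from $x$ to $z$ with this edge produces an open path from $x$ to $y$; hence $y \in \C_x$. This shows $\xi_n \subseteq \C_x$ for every $n$.

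The reverse inclusion $\C_x \subseteq \xi_{N-1}$ is the substantive part. For $y \in \C_x$ let $d(y)$ denote the length of a shortest open path from $x$ to $y$; since such a path visits distinct vertices, $d(y) \le N-1$. I claim the first time $y$ is occupied is exactly generation $d(y)$. One direction is immediate: a particle reaches $y$ at generation $m$ only through a chain of $m$ successive births, which traces an open path of length $m$ from $x$ to $y$, so $y$ cannot be occupied before generation $d(y)$. For the converse I would induct on $d(y)$: taking a geodesic $x = z_0, z_1, \ldots, z_k = y$ with $k = d(y)$, the prefix argument gives $d(z_{k-1}) = k-1$, so by the induction hypothesis together with the lower bound just proved, $z_{k-1}$ is a particle of generation exactly $k-1$. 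When generation $k$ is defined, $y$ is still empty (by the lower bound it could not have been occupied earlier) and is an open neighbor of $z_{k-1}$, hence $y$ is necessarily occupied during the construction of generation $k$. Letting $y$ range over $\C_x$ gives $\C_x \subseteq \xi_{\max_y d(y)} \subseteq \xi_{N-1}$, which combined with the previous paragraph yields $\xi_{N-1} = \C_x$.

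I expect the main obstacle to be the bookkeeping hidden in the last step. Because the particles of a given generation are processed in a fixed order and a vertex with several occupied neighbors is claimed only once, $y$ may in fact be born from some particle preceding $z_{k-1}$ in the labeling rather than from $z_{k-1}$ itself. The point to make carefully is that this priority rule never delays an occupation: if $y$ is still empty when $z_{k-1}$ is processed then it is born from $z_{k-1}$, while if it is already occupied then it was born earlier in the same generation. In either case $y \in \xi_k$, so the induction goes through regardless of which particle actually produces $y$.
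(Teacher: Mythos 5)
Your proof is correct and follows essentially the same route as the paper's: both arguments rest on identifying $\xi_n \setminus \xi_{n-1}$ with the set of vertices whose shortest open path from $x$ has length $n$, then counting by the disjoint-union telescoping and bounding path lengths by $N-1$. The only difference is one of rigor, not of strategy: the paper states this identification as an observation, whereas you actually prove it by a double induction (on generation and on open-path distance) and explicitly check that the sequential priority rule cannot delay an occupation.
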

\begin{proof}
 To begin with, we observe that
\begin{itemize}
 \item Because particles can only give birth to another particle sent to an empty vertex, each vertex is ultimately occupied by at most one particle. \vspace*{4pt}
 \item The open cluster containing~$x$ can be written as
       $$ \begin{array}{rcl}
          \C_x = \{y \in \V & \n : \n & \hbox{there is a self-avoiding path of} \\ & \n \n & \hbox{open edges connecting vertex~$x$ and vertex~$y$} \}. \end{array} $$
 \item The set of vertices occupied by a particle of generation~$n$ is
       $$ \begin{array}{rcl}
          \xi_n \setminus \xi_{n - 1} = \{y \in \C_x & \n : \n & \hbox{the shortest self-avoiding path of} \\ & \n \n & \hbox{open edges connecting~$x$ and~$y$ has length~$n$} \}. \end{array} $$
\end{itemize}
 These three properties imply that all the vertices in the open cluster~$\C_x$ are ultimately occupied by exactly one particle whereas the vertices
 outside the cluster remain empty therefore
\begin{equation}
\label{eq:wet-particle-1}
\begin{array}{rcl}
  S = \card (\C_x) & \n = \n & \displaystyle \card (\xi_0) + \card \bigg(\bigcup_{n = 1}^{\infty} \,(\xi_n \setminus \xi_{n - 1}) \bigg) \\
                   & \n = \n & \displaystyle \card (\xi_0) + \sum_{n = 1}^{\infty} \,\card (\xi_n \setminus \xi_{n - 1}) = \sum_{n = 0}^{\infty} \,Y_n. \end{array}
\end{equation}
 In addition, because the graph has~$N$ vertices, the shortest self-avoiding path on this graph must have at most~$N - 1$ edges, from which it follows that
\begin{equation}
\label{eq:wet-particle-2}
  \xi_n = \xi_{n - 1} \quad \hbox{and} \quad Y_n = \card (\xi_n \setminus \xi_{n - 1}) = 0 \quad \hbox{for all} \quad n > N.
\end{equation}
 Combining~\eqref{eq:wet-particle-1} and~\eqref{eq:wet-particle-2} gives the result.
\end{proof} \\

% % % % % % % % % % % % % % % % % % % % % % % % % % % % % % % % % % % % % % % % % % % % % % % % % % % % % % % % % % % % % % % % % % % % % % % % % % % % % % % % % % % % % %

\noindent{\bf Coupling with a branching process.}
 The next step is to compare the number of particles in the birth process with the number of individuals in a branching process~$(X_n)$.
 The process coincides with the birth process when the graph is a tree and is defined by
 $$ X_0 = 1 \quad \hbox{and} \quad X_{n + 1} = X_{n, 1} + X_{n, 2} + \cdots + X_{n, X_n} \quad \hbox{for all} \quad n \geq 0 $$
 where the random variables~$X_{n, i}$ representing the offspring distribution (number of offspring of individual~$i$ at time~$n$) are independent
 and have probability mass function
 $$ X_{0, 1} = \binomial (D, p) \quad \hbox{and} \quad X_{n, i} = \binomial (D - 1, p) \quad \hbox{for all} \quad n, i \geq 1. $$
 This branching process can be visualized as the number of particles in the birth process above modified so that births onto already occupied vertices are allowed.
 In particular, the branching process dominates stochastically the birth process.
\begin{lemma} --
\label{lem:branching-particle}
 For all~$n \geq 0$, we have the stochastic domination~$Y_n \preceq X_n$.
\end{lemma}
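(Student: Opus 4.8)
The plan is to construct an explicit coupling that embeds the birth process into a Galton--Watson tree with the prescribed offspring law, in such a way that the number of particles of each generation is dominated, particle by particle, by the number of individuals of the corresponding generation. Since almost sure domination implies stochastic domination, this yields $Y_n \preceq X_n$ for every~$n$.

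First I would explore the birth process in breadth-first order --- the root first, then the particles of generation~1 in their index order, then those of generation~2, and so on --- revealing the state~$\zeta(e)$ of an edge~$e$ only the first time one of its endpoints is processed. The key structural observation is that, when a non-root particle $v = x_{n,i}$ is processed, the edge joining~$v$ to its parent has already been revealed (it was revealed when the parent gave birth to~$v$), so at most $D - 1$ of the $D$ edges incident to~$v$ are fresh. Moreover, any incident edge of~$v$ that is not fresh was necessarily revealed while processing its other endpoint, which is therefore an already-occupied vertex; hence~$v$ cannot give birth across such an edge. It follows that every child of~$v$ is produced across a distinct fresh open edge among the $D - 1$ non-parent edges, so the number of children of~$v$ is at most the number of fresh open non-parent edges.

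Next I would turn these counts into genuine independent binomial offspring variables. For the root, all~$D$ incident edges are fresh and every neighbour is empty at generation~0, so the number of children equals $\card\{y \sim x : \zeta(x, y) = 1\}$, which is exactly $\binomial(D, p) = X_{0, 1}$. For a non-root particle $v = x_{n,i}$ I would assign $D - 1$ slots, one per non-parent incident edge: a fresh slot carries the true value~$\zeta$ of its edge, while an already-revealed slot is filled by an independent auxiliary $\bernoulli(p)$ variable drawn from a reservoir. Setting~$X_{n,i}$ equal to the number of open slots gives $X_{n,i} = \binomial(D - 1, p)$, and since each edge is revealed exactly once these fresh values are disjoint across particles and the auxiliary variables are independent by construction, the family~$\{X_{n,i}\}$ is independent with the offspring law of~$(X_n)$. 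Because the number of children of~$v$ is at most the number of fresh open non-parent edges, which is in turn at most the total number of open slots, we obtain that the number of children of~$v$ is at most~$X_{n,i}$.

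Finally I would build, generation by generation, a generation-preserving injection~$\phi$ from the birth-process particles into the Galton--Watson tree associated with~$(X_n)$: the root particle maps to the root, and once~$v$ is mapped to an individual~$u$ with offspring count~$X_{n,i}$, its children are mapped injectively to distinct children of~$u$, which is possible precisely because~$v$ has at most~$X_{n,i}$ children. The unused children of~$u$, together with their subtrees, are generated by the branching mechanism and only inflate the generation sizes. Consequently $Y_n = \card(\xi_n \setminus \xi_{n - 1})$ is at most the number of generation-$n$ individuals, which equals~$X_n$, and the domination $Y_n \preceq X_n$ follows. The main obstacle, and the reason the argument cannot be reduced to the one-line tree computation, is precisely the non-tree structure of the graph: edges can be inspected from both endpoints, so the naive per-particle offspring counts are dependent. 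Revealing each edge only once and padding the already-revealed slots with fresh independent Bernoulli variables is what simultaneously preserves the exact $\binomial(D - 1, p)$ offspring law, guarantees independence across particles, and retains the domination of the true birth counts.
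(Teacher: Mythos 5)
Your proof is correct and follows essentially the same route as the paper's: the key observation in both is that the root has exactly $\binomial (D, p)$ children while every other particle has its parent edge already consumed, leaving at most $D - 1$ fresh edges, each open with probability $p$ independently of the past. The only difference is presentational --- the paper phrases this as a conditional tail-probability comparison and asserts that the generation-wise domination follows, whereas you make that last step explicit by padding the already-revealed edges with auxiliary Bernoulli variables and constructing the generation-preserving injection into the Galton--Watson tree.
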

\begin{proof}
 As for the branching process, for all~$n \geq 0$ and~$i \leq Y_n$, we let
 $$ Y_{n, i} = \hbox{\# offspring of the~$i$th particle of generation~$n$ in the birth process}. $$
 Because the edges are independently open with the same probability~$p$ and there are exactly~$D$ edges starting from each vertex, the number
 of offspring of the first particle is
\begin{equation}
\label{eq:branching-particle-1}
  Y_1 = Y_{0, 1} = \binomial (D, p).
\end{equation}
 For each subsequent particle, say the particle located at~$z$, we distinguish two types of edges starting from~$z$ just before the particle gives birth.
\begin{itemize}
 \item There are~$m$ edges~$(z, y)$ that are connected to an occupied vertex~$y$.
       Because parent and offspring are located on adjacent vertices, we must have~$m \geq 1$. \vspace*{4pt}
 \item There are~$D - m$ edges~$(z, y)$ that are connected to an empty vertex~$y$.
       These edges have not been used yet in the construction of the birth process, i.e., there has been no previous attempt to give birth through these
       edges, therefore each of these edges is open with probability~$p$ independently of the past of the process.
\end{itemize}
 From the previous two properties, we deduce that, for all~$n > 0$ and~$i \leq Y_n$,
\begin{equation}
\label{eq:branching-particle-2}
\begin{array}{rcl}
  P (Y_{n, i} \geq k) & \n = \n &
  E (P (Y_{n, i} \geq k \,| \,Y_{0, 1}, Y_{1, 1}, \ldots, Y_{n, i - 1})) \vspace*{4pt} \\ & \n \leq \n &
  P (\binomial (D - 1, p) \geq k) = P (X_{n, i} \geq k). \end{array}
\end{equation}
 The stochastic domination follows from~\eqref{eq:branching-particle-1} and~\eqref{eq:branching-particle-2}.
\end{proof} \\

% % % % % % % % % % % % % % % % % % % % % % % % % % % % % % % % % % % % % % % % % % % % % % % % % % % % % % % % % % % % % % % % % % % % % % % % % % % % % % % % % % % % % %

\noindent{\bf Number of individuals.}
 It directly follows from Lemmas~\ref{lem:wet-particle} and~\ref{lem:branching-particle} that
\begin{equation}
\label{eq:wet-branching}
  E (S^k) = E ((Y_0 + Y_1 + \cdots + Y_{N - 1})^k) \leq E ((X_0 + X_1 + \cdots + X_{N - 1})^k)
\end{equation}
 for all~$k > 0$.
 In view of~\eqref{eq:wet-branching}, the last step to complete the proof of Theorem~\ref{th:branching} is to show that the upper bounds in the theorem
 are in fact the first and second moments of the total number of individuals up to generation~$R = N - 1$ in the branching process:
 $$ E (\bar X_R) \quad \hbox{and} \quad E (\bar X_R^2) \quad \hbox{where} \quad \bar X_R = X_0 + X_1 + \cdots + X_R. $$
 The rest of this section is devoted to computing these moments.
\begin{lemma} --
\label{lem:branching-first}
 Let~$\nu = (D - 1) p$. Then,
 $$ E (\bar X_R) = 1 + Dp \bigg(\frac{1 - \nu^R}{1 - \nu} \bigg) \quad \hbox{for all} \quad R > 0. $$
\end{lemma}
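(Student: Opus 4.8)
The quantity to compute is $E(\bar X_R) = \sum_{n=0}^R E(X_n)$, so the natural plan is to first find a closed form for $E(X_n)$ for each generation $n$, and then sum the resulting geometric-type series. The branching process has a distinguished root whose offspring distribution is $\binomial(D, p)$, while every subsequent individual has offspring distribution $\binomial(D-1, p)$; this asymmetry between generation~$0$ and the later generations is the only subtlety, so I would keep track of it carefully.

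First I would compute the per-generation means. Since $E(X_0) = 1$ and the root produces on average $Dp$ children, we have $E(X_1) = Dp$. For $n \geq 1$, conditioning on $X_n$ and using that each individual reproduces independently with mean $(D-1)p = \nu$, the standard branching-process recursion gives $E(X_{n+1}) = \nu\, E(X_n)$. Iterating from the base case $E(X_1) = Dp$ yields
$$ E(X_n) = Dp\,\nu^{n-1} \quad \hbox{for all} \quad n \geq 1. $$
I would justify the recursion by the tower property, writing $E(X_{n+1}) = E(E(X_{n+1} \mid X_n)) = E(X_n \cdot E(X_{n,1})) = \nu\, E(X_n)$, since the offspring counts are independent of $X_n$ with common mean $\nu$.

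The final step is to sum:
$$ E(\bar X_R) = E(X_0) + \sum_{n=1}^R E(X_n) = 1 + Dp \sum_{n=1}^R \nu^{n-1} = 1 + Dp \sum_{j=0}^{R-1} \nu^{j}. $$
Applying the finite geometric series formula $\sum_{j=0}^{R-1}\nu^j = (1-\nu^R)/(1-\nu)$ gives exactly the claimed expression.

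I do not expect any serious obstacle here; the computation is routine once the offspring means are correctly identified. The one place to be careful is the bookkeeping at the root, making sure the extra factor of $D$ versus $D-1$ is handled so that the exponent of $\nu$ starts at $n-1$ rather than $n$, and that the geometric sum runs over the right number of terms (namely $R$ terms, indices $n=1$ through $R$). A secondary point worth stating explicitly is that the formula is valid as written when $\nu \neq 1$; since $\nu = (D-1)p$ with $p \in (0,1)$, this holds in the regime of interest, though the limiting case $\nu = 1$ would reduce to $1 + DpR$ by continuity.
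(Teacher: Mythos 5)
Your proof is correct, and it takes a mildly different route from the paper's. The paper decomposes at the root, writing $\bar X_R = 1 + \bar Z_1 + \cdots + \bar Z_{X_1}$ where $\bar Z_i$ counts the descendants of the $i$th child of the root up to generation $R$, obtains $E(\bar X_R) = 1 + Dp\,E(\bar Z_i)$ by conditioning on $X_1$, and then imports the closed form $E(\bar Z_i) = (1-\nu^R)/(1-\nu)$ from Theorem~2 of the authors' earlier work on branching processes. You instead compute the per-generation means from first principles via the recursion $E(X_{n+1}) = \nu\,E(X_n)$ for $n \geq 1$ with base case $E(X_1) = Dp$, and sum the geometric series directly. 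The two arguments are essentially the same calculation organized differently: the paper's version is shorter on the page but leans on an external citation, while yours is fully self-contained and makes the handling of the root asymmetry ($Dp$ versus $\nu = (D-1)p$) completely explicit. Your remark about the removable singularity at $\nu = 1$ (where the expression degenerates to $1 + DpR$) is a small point the paper does not address, and it is worth having since $\nu = 1$ does occur for $p = 1/(D-1)$ within the range $p \in (0,1)$.
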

\begin{proof}
 For~$i = 1, 2, \ldots, X_1$, let
 $$ \begin{array}{rcl}
    \bar Z_i & \n = \n & \hbox{number of descendants of the~$i$th offspring of the first individual} \vspace*{2pt} \\ &&
                         \hbox{up to generation~$R$, including the offspring}. \end{array} $$
 Then~$\bar X_R = 1 + \bar Z_1 + \cdots + \bar Z_{X_1}$ and the~$\bar Z_i$ are independent of~$X_1$ so
 $$ \begin{array}{rcl}
      E (\bar X_R) & \n = \n & E (E (\bar X_R \,| \,X_1)) = E (E (1 + \bar Z_1 + \cdots + \bar Z_{X_1} \,| \,X_1)) \vspace*{4pt} \\
                   & \n = \n & E (1 + X_1 E (\bar Z_i)) = 1 + E (X_1) E (\bar Z_i) = 1 + Dp E (\bar Z_i). \end{array} $$
 Because~$\bar Z_i$ is the number of individuals up to generation~$R - 1$ in a branching process with offspring
 distribution~$\binomial (D - 1, p)$, we deduce from~\cite[Theorem~2]{jevtic_lanchier_2020} that
 $$ E (\bar X_R) = 1 + Dp \bigg(\frac{1 - (\mu p)^R}{1 - \mu p} \bigg)
                 = 1 + Dp \bigg(\frac{1 - \nu^R}{1 - \nu} \bigg) \quad \hbox{where} \quad \nu = \mu p = (D - 1) p. $$
 This completes the proof.
\end{proof} \\ \\
 Using the same decomposition as in the previous lemma, we now compute the second moment of the number of individuals up to generation~$R = N - 1$.
\begin{lemma} --
\label{lem:branching-second}
 Let~$\nu = (D - 1) p$. Then, for all~$R > 0$,
 $$ E (\bar X_R^2) = \bigg(1 + Dp \bigg(\frac{1 - \nu^R}{1 - \nu} \bigg) \bigg)^2 +
                     \frac{Dp (1 - p)}{(1 - \nu)^2} \bigg(\frac{(1 - \nu^R)(1 + \nu^{R + 1})}{1 - \nu} - 2R \nu^R \bigg). $$
\end{lemma}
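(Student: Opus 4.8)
The plan is to reuse the branching decomposition introduced in the proof of Lemma~\ref{lem:branching-first}. I would write $\bar X_R = 1 + \bar Z_1 + \cdots + \bar Z_{X_1}$, where $X_1 = \binomial (D, p)$ and the $\bar Z_i$ are independent copies, independent of $X_1$, of the number of individuals up to generation $R - 1$ in a branching process with offspring distribution $\binomial (D - 1, p)$. The key observation is that the first term on the right-hand side of the claimed identity is precisely the square of the mean computed in Lemma~\ref{lem:branching-first}, that is $(E (\bar X_R))^2$. Hence, using $E (\bar X_R^2) = (E (\bar X_R))^2 + \var (\bar X_R)$, it suffices to identify the second term of the statement with the variance $\var (\bar X_R)$.

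Since the additive constant $1$ does not affect the variance, applying the law of total variance to the random sum $\bar Z_1 + \cdots + \bar Z_{X_1}$, in which the $\bar Z_i$ are i.i.d. and independent of $X_1$, gives
$$ \var (\bar X_R) = E (X_1) \, \var (\bar Z_i) + \var (X_1) (E (\bar Z_i))^2. $$
Because $X_1 = \binomial (D, p)$ we have $E (X_1) = Dp$ and $\var (X_1) = Dp (1 - p)$, and from Lemma~\ref{lem:branching-first} we already know $E (\bar Z_i) = (1 - \nu^R)/(1 - \nu)$. The only missing ingredient is therefore $\var (\bar Z_i)$.

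To obtain it, I would condition once more on the first generation of the subtree. Let $a_g$ and $v_g$ denote the mean and variance of the total number of individuals up to generation $g$ in a branching process with offspring mean $\nu = (D - 1) p$ and offspring variance $\nu (1 - p)$, the latter being the variance of $\binomial (D - 1, p)$. The same random-sum identity yields the linear recursion $v_g = \nu v_{g - 1} + \nu (1 - p) a_{g - 1}^2$ with $v_0 = 0$, where $a_g = (1 - \nu^{g + 1})/(1 - \nu)$. Solving this recursion gives
$$ v_g = \frac{\nu (1 - p)}{(1 - \nu)^2} \sum_{k = 0}^{g - 1} \nu^{g - 1 - k} (1 - \nu^{k + 1})^2, $$
and expanding the square splits the sum into three geometric series whose closed forms produce exactly the quantities $(1 - \nu^g)(1 + \nu^{g + 1})/(1 - \nu)$ and $-2 g \nu^g$; this is the content one may also read directly from \cite[Theorem~2]{jevtic_lanchier_2020}. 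Taking $g = R - 1$ and substituting $\var (\bar Z_i) = v_{R - 1}$ into the total-variance formula reduces the lemma to a single polynomial identity in $\nu$ and $R$.

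The main obstacle is exactly this final simplification rather than any probabilistic subtlety. The variance $v_{R - 1}$ naturally carries the exponents $\nu^{R - 1}, \nu^R$ and the factor $2 (R - 1)$, whereas the target is written with $\nu^R, \nu^{R + 1}$ and $2R$; the factor $\nu$ sitting in front of $v_{R-1}$ (through the offspring variance $\nu(1-p)$) shifts every exponent upward by one, and this contribution must be combined with $\var (X_1)(E (\bar Z_i))^2 = Dp (1 - p)(1 - \nu^R)^2/(1 - \nu)^2$. After clearing the denominator $1 - \nu$, the stray terms $\nu$ and $\nu^{2R}$ cancel and the numerator collapses to $(1 - \nu^R)(1 + \nu^{R + 1})$, leaving the announced $-2 R \nu^R$ correction. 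I would carry out this bookkeeping explicitly, since index and sign errors in the geometric sums are the only genuine risk in the computation.
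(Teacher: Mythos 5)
Your proposal is correct and follows essentially the same route as the paper: the same decomposition $\bar X_R = 1 + \bar Z_1 + \cdots + \bar Z_{X_1}$, the same identification of $E(\bar Z_i)$ and $\var(\bar Z_i)$ (the paper cites \cite[Theorem~2]{jevtic_lanchier_2020} for the subtree moments, which is equivalent to solving your recursion $v_g = \nu v_{g-1} + \nu(1-p)a_{g-1}^2$), and the same final algebraic collapse to $(1-\nu^R)(1+\nu^{R+1})/(1-\nu) - 2R\nu^R$. The only cosmetic difference is that you invoke the law of total variance directly, whereas the paper expands the conditional second moment and rearranges it into the identical form $(1 + DpE(\bar Z_i))^2 + Dp(1-p)(E(\bar Z_i))^2 + Dp\var(\bar Z_i)$.
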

\begin{proof}
 Using again~$\bar X_R = 1 + \bar Z_1 + \cdots + \bar Z_{X_1}$ and independence, we get
\begin{equation}
\label{eq:branching-second-1}
  \begin{array}{rcl}
    E (\bar X_R^2) & \n = \n & E (E ((1 + \bar Z_1 + \cdots + \bar Z_{X_1})^2 \,| \,X_1)) \vspace*{4pt} \\
                   & \n = \n & E (E (1 + 2 (\bar Z_1 + \cdots + \bar Z_{X_1}) + (\bar Z_1 + \cdots + \bar Z_{X_1})^2 \,| \,X_1)) \vspace*{4pt} \\
                   & \n = \n & E (1 + 2 X_1 E (\bar Z_i) + X_1 E (\bar Z_i^2) + X_1 (X_1 - 1)(E (Z_i))^2) \vspace*{4pt} \\
                   & \n = \n & 1 + 2 E (X_1) E (\bar Z_i) + E (X_1) E (\bar Z_i^2) + E (X_1 (X_1 - 1))(E (Z_i))^2. \end{array}
\end{equation}
 In addition, using that~$X_1 = \binomial (D, p)$, we get
\begin{equation}
\label{eq:branching-second-2}
  \begin{array}{rcl}
    E (X_1 (X_1 - 1)) & \n = \n & \var (X_1) + (E (X_1))^2 - E (X_1) \vspace*{4pt} \\
                      & \n = \n & Dp (1 - p) + D^2 p^2 - Dp = D (D - 1) p^2. \end{array}
\end{equation}
 Combining~\eqref{eq:branching-second-1} and~\eqref{eq:branching-second-2} gives
 $$ \begin{array}{rcl}
      E (\bar X_R^2) & \n = \n & 1 + 2 Dp E (\bar Z_i) + Dp E (\bar Z_i^2) + D (D - 1) p^2 (E (\bar Z_i))^2 \vspace*{4pt} \\
                     & \n = \n & 1 + 2 Dp E (\bar Z_i) + Dp (\var (\bar Z_i) + (E (\bar Z_i))^2) + D (D - 1) p^2 (E (\bar Z_i))^2 \vspace*{4pt} \\
                     & \n = \n & 1 + 2 Dp E (\bar Z_i) + Dp (Dp + 1 - p)(E (\bar Z_i))^2 + Dp \var (\bar Z_i) \vspace*{4pt} \\
                     & \n = \n & (1 +  Dp E (\bar Z_i))^2 + Dp (1 - p)(E (\bar Z_i))^2 + Dp \var (\bar Z_i). \end{array} $$
 Then, applying~\cite[Theorem~2]{jevtic_lanchier_2020} with~$\mu = D - 1$ and~$\sigma^2 = 0$, we get
 $$ \begin{array}{rcl}
      E (\bar X_R^2) = \bigg(1 & \n + \n & \displaystyle Dp \bigg(\frac{1 - \nu^R}{1 - \nu} \bigg) \bigg)^2 + Dp (1 - p) \bigg(\frac{1 - \nu^R}{1 - \nu} \bigg)^2 \vspace*{8pt} \\
                               & \n + \n & \displaystyle Dp \ \frac{\nu (1 - p)}{(1 - \nu)^2} \bigg(\frac{1 - \nu^{2R - 1}}{1 - \nu} - (2R - 1) \nu^{R - 1} \bigg). \end{array} $$
 Observing also that
 $$ \begin{array}{rcl}
    \displaystyle Dp (1 - p) \bigg(\frac{1 - \nu^R}{1 - \nu} \bigg)^2 & \n + \n &
    \displaystyle Dp \ \frac{\nu (1 - p)}{(1 - \nu)^2} \bigg(\frac{1 - \nu^{2R - 1}}{1 - \nu} - (2R - 1) \nu^{R - 1} \bigg) \vspace*{8pt} \\ & \n = \n &
    \displaystyle \frac{Dp (1 - p)}{(1 - \nu)^2} \bigg(\frac{(1 - \nu)(1 - \nu^R)^2 + \nu (1 - \nu^{2R - 1})}{1 - \nu} - (2R - 1) \nu^R \bigg) \vspace*{8pt} \\ & \n = \n &
    \displaystyle \frac{Dp (1 - p)}{(1 - \nu)^2} \bigg(\frac{1 - 2 \nu^R + 2 \nu^{R + 1} - \nu^{2R + 1}}{1 - \nu} - (2R - 1) \nu^R \bigg) \vspace*{8pt} \\ & \n = \n &
    \displaystyle \frac{Dp (1 - p)}{(1 - \nu)^2} \bigg(\frac{1 - 2 \nu^R + 2 \nu^{R + 1} - \nu^{2R + 1} + (1 - \nu) \nu^R}{1 - \nu} - 2R \nu^R \bigg) \vspace*{8pt} \\ & \n = \n &
    \displaystyle \frac{Dp (1 - p)}{(1 - \nu)^2} \bigg(\frac{(1 - \nu^R)(1 + \nu^{R + 1})}{1 - \nu} - 2R \nu^R \bigg) \end{array} $$
 completes the proof.
\end{proof} \\ \\
 Theorem~\ref{th:branching} directly follows from~\eqref{eq:wet-branching}, and from Lemmas~\ref{lem:branching-first} and~\ref{lem:branching-second}.

%%%%%%%%%%%%%%%%%%%%%%%%%%%%%%%%%%%%%%%%%%%%%%%%%%%%%%%%%%%%%%%%%%%%%%%%%%%%%%%%%%%%%%%%%%%%%%%%%%%%%%%%%%%%%%%%%%%%%%%%%%%%%%%%%%%%%%%%%%%%%%%%%%%%%%%%%%%%%%%%%%%%%%%%%%%%

\section{Proof of Theorem~\ref{th:plarge}}
\label{sec:plarge}
 Theorem~\ref{th:plarge} relies on the following simple observation:
 vertex~$y \neq x$ cannot be in the percolation cluster starting at~$x$ when all the edges incident to~$y$ are closed.
 In contrast with the comparison with branching processes, this result leads to a good approximation of the moments of the size distribution when
 the probability~$p$ approaches one.
 To prove the theorem, note that
\begin{equation}
\label{eq:plarge-1}
\begin{array}{rcl}
  E (S^k) = \displaystyle E \bigg(\sum_{y \in \V} \,\ind \{y \in \C_x \} \bigg)^k & \n = \n &
            \displaystyle \sum_{y_1, \ldots, y_k \in \V} E (\ind \{y_1 \in \C_x \} \ \cdots \ \ind \{y_k \in \C_x \}) \vspace*{4pt} \\ & \n = \n &
            \displaystyle \sum_{y_1, \ldots, y_k \in \V} P (x \leftrightarrow y_1, \ldots, x \leftrightarrow y_k) \end{array}
\end{equation}
 for all integers~$k$.
 To estimate the last sum, we let~$B_y$ be the event that all the edges incident to~$y$ are closed.
 Using that there are exactly~$D$ edges incident to each vertex, and that there is at most one edge connecting any two different vertices, say~$y \neq z$, we get
\begin{equation}
\label{eq:plarge-2}
\begin{array}{rcl}
           P (B_y) & \n = \n & (1 - p)^D \vspace*{4pt} \\
  P (B_y \cup B_z) & \n = \n & P (B_y) + P (B_z) - P (B_y \cap B_z) \geq 2 (1 - p)^D - (1 - p)^{2D - 1}. \end{array}
\end{equation}
 In addition, we have the inclusion of events
\begin{equation}
\label{eq:plarge-3}
  B_y \subset \{x \not \leftrightarrow y \} \quad \hbox{for all} \quad y \neq x.
\end{equation}
 Combining~\eqref{eq:plarge-2} and~\eqref{eq:plarge-3}, we get
\begin{equation}
\label{eq:plarge-4}
  P (x \not \leftrightarrow y \ \hbox{or} \ x \not \leftrightarrow z) \geq \left\{\begin{array}{lcl}
    (1 - p)^D & \hbox{when} & \card \{x, y, z \} = 2 \vspace*{4pt} \\
  2 (1 - p)^D - (1 - p)^{2D - 1} & \hbox{when} & \card \{x, y, z \} = 3. \end{array} \right.
\end{equation}
 Using~\eqref{eq:plarge-1} with~$k = 1$ and~\eqref{eq:plarge-4}, we deduce that
 $$ \begin{array}{rcl}
      E (S) & \n = \n & \displaystyle 1 + \sum_{y \neq x} \,P (x \leftrightarrow y) = 1 + \sum_{y \neq x} \,(1 - P (x \not \leftrightarrow y)) \vspace*{4pt} \\
            & \n \leq \n & \displaystyle 1 + \sum_{y \neq x} \,(1 - (1 - p)^D) = 1 + (N - 1)(1 - (1 - p)^D) = N - (N - 1)(1 - p)^D. \end{array} $$
 Similarly, applying~\eqref{eq:plarge-1} with~$k = 2$, observing that
 $$ \begin{array}{rcl}
    \card \{(y, z) \in \V^2 : \card \{x, y, z \} = 2 \} & \n = \n & 3 (N - 1) \vspace*{4pt} \\
    \card \{(y, z) \in \V^2 : \card \{x, y, z \} = 3 \} & \n = \n & (N - 1)(N - 2), \end{array} $$
 and using~\eqref{eq:plarge-4}, we deduce that
 $$ \begin{array}{rcl}
      E (S^2) & \n \leq & \n 1 + 3 (N - 1)(1 - (1 - p)^D) + (N - 1)(N - 2)(1 - 2 (1 - p)^D + (1 - p)^{2D - 1}) \vspace*{4pt} \\
              & \n = \n & N^2 - 3 (N - 1)(1 - p)^D - (N - 1)(N - 2)(2 (1 - p)^D - (1 - p)^{2D - 1}) \vspace*{4pt} \\
              & \n = \n & N^2 - (N - 1)(2N - 1)(1 - p)^D + (N - 1)(N - 2)(1 - p)^{2D - 1}. \end{array} $$
 This completes the proof of Theorem~\ref{th:plarge}.

%%%%%%%%%%%%%%%%%%%%%%%%%%%%%%%%%%%%%%%%%%%%%%%%%%%%%%%%%%%%%%%%%%%%%%%%%%%%%%%%%%%%%%%%%%%%%%%%%%%%%%%%%%%%%%%%%%%%%%%%%%%%%%%%%%%%%%%%%%%%%%%%%%%%%%%%%%%%%%%%%%%%%%%%%%%%

\section{Proof of Theorems~\ref{th:04}--\ref{th:12-20} (inclusion-exclusion identity)}
\label{sec:inclusion-exclusion}
 Theorems~\ref{th:04}--\ref{th:12-20} follow from an application of the inclusion-exclusion identity.
 To begin with, we prove a result (see~\eqref{eq:inclusion-exclusion-4} below) that holds not only for all five Platonic solids but also a larger class of finite
 regular graphs.
 Fix a vertex~$x \in \V$, let~$r$ be the radius of the graph, and define
 $$ \Lambda_s = \{y \in \V : d (x, y) = s \} \quad \hbox{and} \quad N_s = \card (\Lambda_s) \quad \hbox{for} \quad s = 0, 1, \ldots, r. $$
 At least for the Platonic solids, $N_s$ does not depend on the choice of~$x$. Fixing
 $$ y_s \in \Lambda_s \quad \hbox{for all} \quad s = 0, 1, \ldots, r, $$
 and applying~\eqref{eq:plarge-1} with~$k = 1$, we get
\begin{equation}
\label{eq:inclusion-exclusion-1}
  E (S) = \sum_{y \in \V} \,P (x \leftrightarrow y) = \sum_{s = 0}^r \ \sum_{y \in \Lambda_s} P (x \leftrightarrow y) = \sum_{s = 0}^r \,N_s P (x \leftrightarrow y_s).
\end{equation}
 To compute the probabilities~$p_s = P (x \leftrightarrow y_s)$, we label the edges~$0, 1, \ldots, n - 1$, think of each self-avoiding path~$\pi$ as the
 collection of its edges, and let
 $$ \begin{array}{rcl}
    \pi_1 (y_s), \ldots, \pi_{K_s} (y_s) & \n = \n & \hbox{all the self-avoiding paths~$x \to y_s$} \vspace*{4pt} \\
                                     A_i & \n = \n & \hbox{the event that~$\pi_i (y_s)$ is an open path for} \ i = 1, 2, \ldots, K_s. \end{array} $$
 Because the edges are independently open with the same probability~$p$,
 $$ \begin{array}{rcl}
      P (A_{i_1} \cap \cdots \cap A_{i_j}) & \n = \n & P (\pi_{i_1} (y_s), \ldots, \pi_{i_j} (y_s) \ \hbox{are open paths}) \vspace*{4pt} \\
                                           & \n = \n & P (e \ \hbox{is open for all} \ e \in \pi_{i_1} (y_s) \cup \cdots \cup \pi_{i_j} (y_s)) \vspace*{4pt} \\
                                           & \n = \n & p^{\card (\pi_{i_1} (y_s) \,\cup \,\cdots \,\cup \,\pi_{i_j} (y_s))} \end{array} $$
 for all~$0 < i_1 < \cdots < i_j \leq K_s$.
 Here~$\card$ refers to the number of edges in the subgraph that consists of the union of the self-avoiding paths.
 Using that~$x \leftrightarrow y_s$ if and only if at least one of the paths connecting the two vertices is open, and the inclusion-exclusion
 identity, we deduce that
\begin{equation}
\label{eq:inclusion-exclusion-2}
  \begin{array}{rcl}
  \displaystyle P (x \leftrightarrow y_s) & \n = \n &
  \displaystyle P \bigg(\bigcup_{j = 1}^{K_s} \,A_j \bigg) =
  \displaystyle \sum_{j = 1}^{K_s} \ (- 1)^{j + 1} \sum_{0 < i_1 < \cdots < i_j \leq K_s} P (A_{i_1} \cap \cdots \cap A_{i_j}) \vspace*{4pt} \\ & \n = \n &
  \displaystyle \sum_{j = 1}^{K_s} \ (- 1)^{j + 1} \sum_{0 < i_1 < \cdots < i_j \leq K_s} p^{\card (\pi_{i_1} (y_s) \,\cup \,\cdots \,\cup \,\pi_{i_j} (y_s))}. \end{array}
\end{equation}
 Note that, in the previous expression, the index~$j$ corresponds to the number of self-avoiding paths while the second sum is over all possible
 choices of~$j$ paths.
 In particular, the double sum consists in looking at all the possible nonempty sub-collections of the~$K_s$ self-avoiding paths, therefore the right-hand
 side of~\eqref{eq:inclusion-exclusion-2} can be rewritten as
\begin{equation}
\label{eq:inclusion-exclusion-3}
  P (x \leftrightarrow y_s) =
       \sum_{B \subset [K_s] : B \neq \varnothing} \ (- 1)^{\card (B) + 1} \ p^{\card \!\! \big(\bigcup_{i \in B} \pi_i (y_s) \big)}
\end{equation}
 where~$[K_s] = \{1, 2, \ldots, K_s \}$.
 Combining~\eqref{eq:inclusion-exclusion-1} and~\eqref{eq:inclusion-exclusion-3} gives
\begin{equation}
\label{eq:inclusion-exclusion-4}
  E (S) = \sum_{s = 0}^r \ N_s \ \bigg(\sum_{B \subset [K_s] : B \neq \varnothing} \ (- 1)^{\card (B) + 1} \ p^{\card \!\! \big(\bigcup_{i \in B} \pi_i (y_s) \big)} \bigg).
\end{equation}
 The previous equation shows that, at least in theory, computing the mean cluster size reduces to finding the self-avoiding paths that connect any two
 vertices of the graph.
 We now apply~\eqref{eq:inclusion-exclusion-4} to each of the five Platonic solids in order to prove Theorems~\ref{th:04}--\ref{th:12-20}. \\ \\
\begin{proofof}{Theorem~\ref{th:04}}
\begin{figure}[t!]
\centering
\scalebox{0.50}{\input{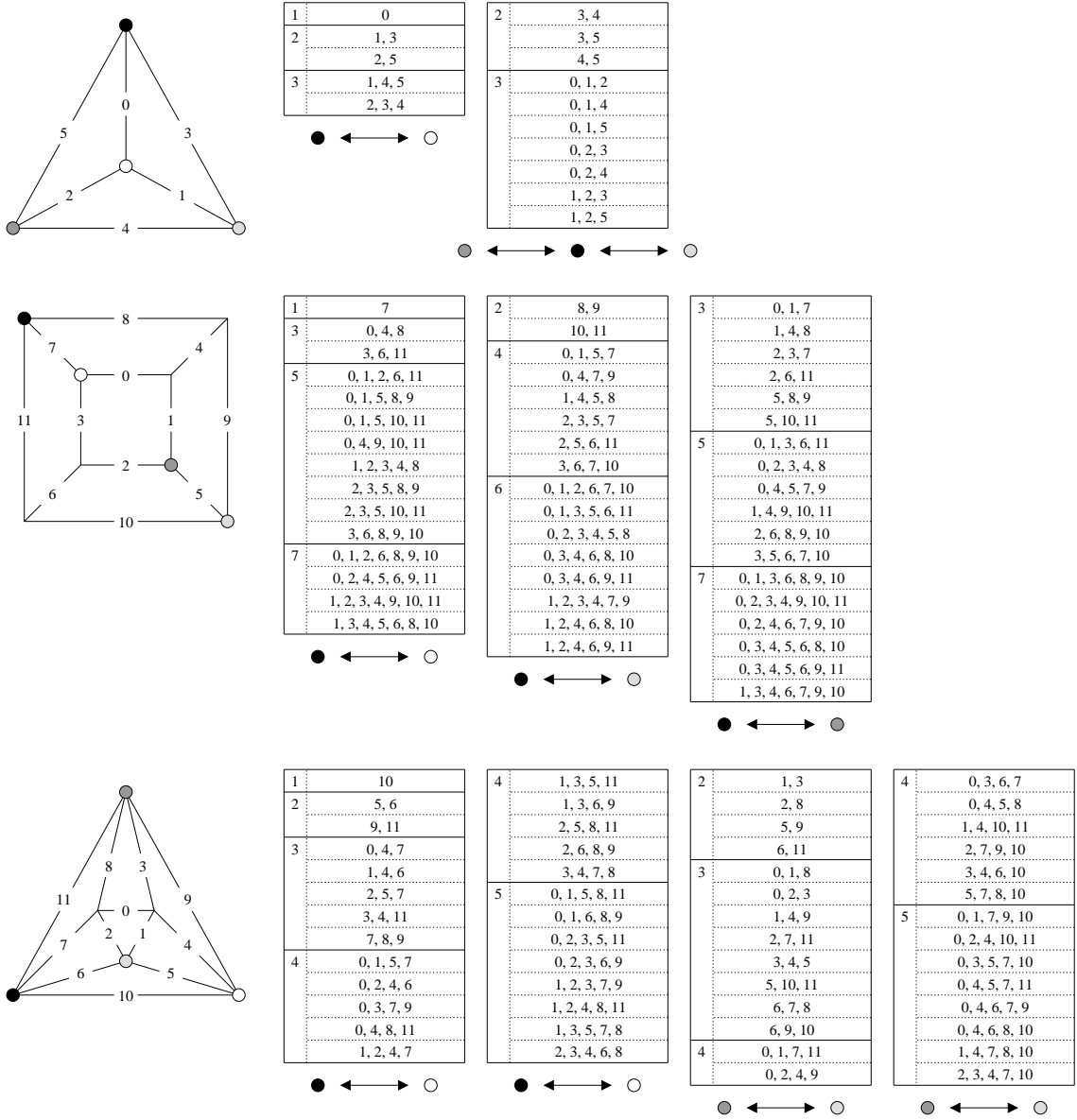}}
\caption{\upshape{The three pictures on the left show planar representations of the tetrahedron, the cube and the octahedron, along with an arbitrary
                  labeling of their edges.
                  The tables on the right give the list of the self-avoiding paths connecting the two vertices (or pairs of self-avoiding paths connecting
                  the three vertices) represented by the black, dark grey, light grey and/or white dots in the pictures.
                  Each path is represented by the collection of its edges using the labels shown in the pictures.
                  The numbers in the first column of each table indicate the length of the paths.}}
\label{fig:path-468}
\end{figure}
 For the tetrahedron, all the vertices are distance one apart and there are exactly five self-avoiding paths connecting any two vertices (see
 first table in Figure~\ref{fig:path-468}).
 Calling these paths~$\pi_1, \ldots, \pi_5$ in the order they are listed in the table, and writing
 $$ \card (\pi_{i_1} \cup \pi_{i_2} \cup \cdots \cup \pi_{i_j}) = |\pi_{i_1, i_2, \ldots, i_j}| $$
 for short, one can easily check that
 $$ \begin{array}{rclrclrclrclrcl}
      |\pi_1| & \n = \n & 1 \quad & |\pi_{1, 2}| & \n = \n & 3 \quad & |\pi_{1, 2, 3}| & \n = \n & 5 \quad & |\pi_{1, 2, 3, 4}| & \n = \n & 6 \quad & |\pi_{1, 2, 3, 4, 5}| & \n = \n & 6 \vspace*{2pt} \\
      |\pi_2| & \n = \n & 2 \quad & |\pi_{1, 3}| & \n = \n & 3 \quad & |\pi_{1, 2, 4}| & \n = \n & 5 \quad & |\pi_{1, 2, 3, 5}| & \n = \n & 6 \vspace*{2pt} \\
      |\pi_3| & \n = \n & 2 \quad & |\pi_{1, 4}| & \n = \n & 4 \quad & |\pi_{1, 2, 5}| & \n = \n & 5 \quad & |\pi_{1, 2, 4, 5}| & \n = \n & 6 \vspace*{2pt} \\
      |\pi_4| & \n = \n & 3 \quad & |\pi_{1, 5}| & \n = \n & 4 \quad & |\pi_{1, 3, 4}| & \n = \n & 5 \quad & |\pi_{1, 3, 4, 5}| & \n = \n & 6 \vspace*{2pt} \\
      |\pi_5| & \n = \n & 3 \quad & |\pi_{2, 3}| & \n = \n & 4 \quad & |\pi_{1, 3, 5}| & \n = \n & 5 \quad & |\pi_{2, 3, 4, 5}| & \n = \n & 5 \vspace*{2pt} \\
              &         &   \quad & |\pi_{2, 4}| & \n = \n & 4 \quad & |\pi_{1, 4, 5}| & \n = \n & 6 \vspace*{2pt} \\
              &         &   \quad & |\pi_{2, 5}| & \n = \n & 4 \quad & |\pi_{2, 3, 4}| & \n = \n & 5 \vspace*{2pt} \\
              &         &   \quad & |\pi_{3, 4}| & \n = \n & 4 \quad & |\pi_{2, 3, 5}| & \n = \n & 5 \vspace*{2pt} \\
              &         &   \quad & |\pi_{3, 5}| & \n = \n & 4 \quad & |\pi_{2, 4, 5}| & \n = \n & 5 \vspace*{2pt} \\
              &         &   \quad & |\pi_{4, 5}| & \n = \n & 5 \quad & |\pi_{3, 4, 5}| & \n = \n & 5 \end{array} $$
 This, together with~\eqref{eq:inclusion-exclusion-3}, implies that, for all~$x \neq y$,
\begin{equation}
\label{eq:tetrahedron-1}
  \begin{array}{rcl}
    P (x \leftrightarrow y) & \n = \n & (p + 2p^2 + 2p^3) - (2p^3 + 7p^4 + p^5) + (9p^5 + p^6) - (p^5 + 4p^6) + p^6 \vspace*{4pt} \\
                            & \n = \n & p + 2p^2 - 7p^4 + 7p^5 - 2p^6 = (0, 1, 2, 0, -7, 7, -2) \cdot P_6. \end{array}
\end{equation}
 Using also~\eqref{eq:inclusion-exclusion-4} and that~$N_1 = 3$ for the tetrahedron, we conclude that
 $$ E (S_4) = 1 + 3 \,(0, 1, 2, 0, -7, 7, -2) \cdot P_6 = (1, 3, 6, 0, -21, 21, -6) \cdot P_6 $$
 which proves the first part of Theorem~\ref{th:04}. \\
\indent To compute the second moment, we observe that any three distinct vertices of the tetrahedron always form a triangle (regardless of the choice
 of the vertices) and, for all~$x \in \V$,
 $$ \begin{array}{rcl}
    \card \{(y, z) \in \V^2 : \card \{x, y, z \} = 2 \} & \n = \n & 3 \times 3 = 9  \vspace*{4pt} \\
    \card \{(y, z) \in \V^2 : \card \{x, y, z \} = 3 \} & \n = \n & 3 \times 2 = 6. \end{array} $$
 Using also~\eqref{eq:plarge-1} with~$k = 2$, we get
\begin{equation}
\label{eq:tetrahedron-2}
  E (S_4^2) = P (x \leftrightarrow x) + 9 P (x \leftrightarrow y) + 6 P (x \leftrightarrow y, x \leftrightarrow z)
\end{equation}
 where vertices~$x, y, z$ are arbitrary but all three distinct.
 In addition, letting~$\gamma_1, \gamma_2, \ldots, \gamma_K$ be the pairs of self-avoiding paths connecting all three vertices, and using the same argument
 as before based on the inclusion-exclusion identity, we get
\begin{equation}
\label{eq:inclusion-exclusion-5}
  P (x \leftrightarrow y, x \leftrightarrow z) =
       \sum_{B \subset [K] : B \neq \varnothing} \ (- 1)^{\card (B) + 1} \ p^{\card \!\! \big(\bigcup_{i \in B} \gamma_i \big)}
\end{equation}
 which can be viewed as the analog of~\eqref{eq:inclusion-exclusion-3}.
 For the tetrahedron, there are~$K = 10$ such paths (see the second table in Figure~\ref{fig:path-468}).
 As previously, computing
 $$ \card \bigg(\bigcup_{i \in B} \gamma_i \bigg) \quad \hbox{for every} \ B \subset [10] = \{1, 2, \ldots, 10 \} $$
 is straightforward in the sense that it does not require any logical thinking.
 However, having ten self-avoiding paths, the sum in~\eqref{eq:inclusion-exclusion-5} is now over
 $$ 2^{10} - 1 = 1,023 \ \ \hbox{terms} $$
 and is therefore unrealistic to compute by hand.
 Also, to compute~\eqref{eq:inclusion-exclusion-5}, we designed a computer program that goes through all the possible subsets~$B \subset [10]$
 and returns six~(= number of edges of the tetrahedron) coefficients~$a_0, a_1, \ldots, a_6$.
 These seven coefficients are initially set to zero and increase or decrease by one according to the following algorithm:
\begin{equation}
\label{eq:algorithm}
\begin{array}{rcl}
  \hbox{replace~$a_j \to a_j + 1$} & \n \hbox{each time} \n & \card \Big(\bigcup_{i \in B} \gamma_i \Big) = j \ \hbox{and} \ \card (B) \ \hbox{is odd}   \vspace*{4pt} \\
  \hbox{replace~$a_j \to a_j - 1$} & \n \hbox{each time} \n & \card \Big(\bigcup_{i \in B} \gamma_i \Big) = j \ \hbox{and} \ \card (B) \ \hbox{is even}. \end{array} 
\end{equation}
 In other words, because the tetrahedron contains six edges, the right-hand side of~\eqref{eq:inclusion-exclusion-5} is a polynomial with degree at most six,
 and the algorithm returns the value of the seven coefficients of this polynomial.
 We point out that the values we obtain are exact because the computer is used to add a large number of integers rather than to simulate the
 percolation process.
 Therefore, the expression of the second moment in the theorem is indeed exact even though we rely on the use of a computer.
 The input of the program is the ten self-avoiding paths represented by the subsets of edges in the second table of Figure~\ref{fig:path-468}, and the output
 of the program is
 $$ a_0 = 0, \quad a_1 = 0, \quad a_2 = 3, \quad a_3 = 5, \quad a_4 = - 18, \quad a_5 = 15, \quad a_6 = - 4. $$
 This, together with~\eqref{eq:tetrahedron-1} and~\eqref{eq:tetrahedron-2}, implies that
 $$ \begin{array}{rcl}
     E (S_4^2) & \n = \n & 1 + 9 \,(0, 1, 2, 0, -7, 7, -2) \cdot P_6 + 6 \,(a_0, a_1, a_2, a_3, a_4, a_5, a_6) \cdot P_6 \vspace*{4pt} \\
               & \n = \n & 1 + 9 \,(0, 1, 2, 0, -7, 7, -2) \cdot P_6 + 6 \,(0, 0, 3, 5, -18, 15, -4) \cdot P_6 \vspace*{4pt} \\
               & \n = \n & (1, 9, 36, 30, -171, 153, -42) \cdot P_6. \end{array} $$
 This completes the proof of Theorem~\ref{th:04}.
\end{proofof} \\ \\
\begin{proofof}{Theorem~\ref{th:06-08}}
\begin{figure}[t!]
\centering
\scalebox{0.60}{\input{poly-06-08.pstex_t}}
\caption{\upshape{Coefficients returned by algorithm~\eqref{eq:algorithm} for the cube (left) and the octahedron (right) using the self-avoiding paths
                  listed in Figure~\ref{fig:path-468}.
                  The last column of each table is equal to the linear combination of the other columns with weight given by the value of the~$N_s$ in
                  the second row, which corresponds to the coefficients of the polynomial in~$p$ equal to the first moment of the cluster size.}}
\label{fig:poly-06-08}
\end{figure}
 The idea is again to compute the sum~\eqref{eq:inclusion-exclusion-4} explicitly by first collecting the self-avoiding paths connecting two vertices and then
 using the computer program mentioned above to obtain the exact value of the coefficients of the polynomial. \vspace*{5pt} \\
\noindent {\bf Cube.}
 For the cube, there are respectively fifteen, sixteen and eighteen self-avoiding paths connecting any two vertices at distance one, two, and three
 from each other, as shown in~Figure~\ref{fig:path-468}.
 Because the cube has twelve edges, the sum consists of a polynomial with degree~12.
 The first four columns in the first table of Figure~\ref{fig:poly-06-08} show the coefficients computed by our program from the list of all the
 self-avoiding paths.
 The first column simply means that, with probability one, a vertex is in the open cluster starting from itself while the second column means that a vertex
 of the cube at distance one of vertex~$x$ is in the open cluster starting at~$x$ with probability
 $$ \begin{array}{l}
      (0, 1, 0, 2, -2, 8, -15, -5, 0, 67, -99, 55, -11) \cdot P_{12} \vspace*{4pt} \\ \hspace*{20pt} = \
       p + 2p^3 -2p^4 + 8p^5 - 15p^6 - 5p^7 + 67p^9 - 99p^{10} + 55p^{11} - 11p^{12}. \end{array} $$
 The second row in the first table of Figure~\ref{fig:poly-06-08} shows the value of~$N_s$ for the cube.
 The last column is simply the linear combination of the first four columns where column~$s$ has weight~$N_s$.
 By~\eqref{eq:inclusion-exclusion-4}, this is the expected value of the cluster size so the proof for the cube is complete. \vspace*{5pt} \\
\noindent {\bf Octahedron.}
 Because the radius of the octahedron is two, two distinct vertices can only be at distance one or two apart.
 There are respectively twenty-six and twenty-eight self-avoiding paths connecting any two vertices at distance one and two from each
 other (see Figure~\ref{fig:path-468}).
 Note that the sum in~\eqref{eq:inclusion-exclusion-3} for two vertices of the octahedron at distance two apart now contains
 $$ 2^{28} - 1 = 268,435,455 \ \ \hbox{terms} $$
 so the use of a computer is absolutely necessary to compute this sum explicitly.
 The sum again consists of a polynomial with degree~12, the common number of edges in the cube and the octahedron,
 and the program gives the coefficients reported in the second table of Figure~\ref{fig:poly-06-08}.
 The rest of the proof is exactly the same as for the cube.
\end{proofof} \\ \\
\begin{proofof}{Theorems~\ref{th:12-20}}
\begin{figure}[t!]
\centering
\scalebox{0.30}{\input{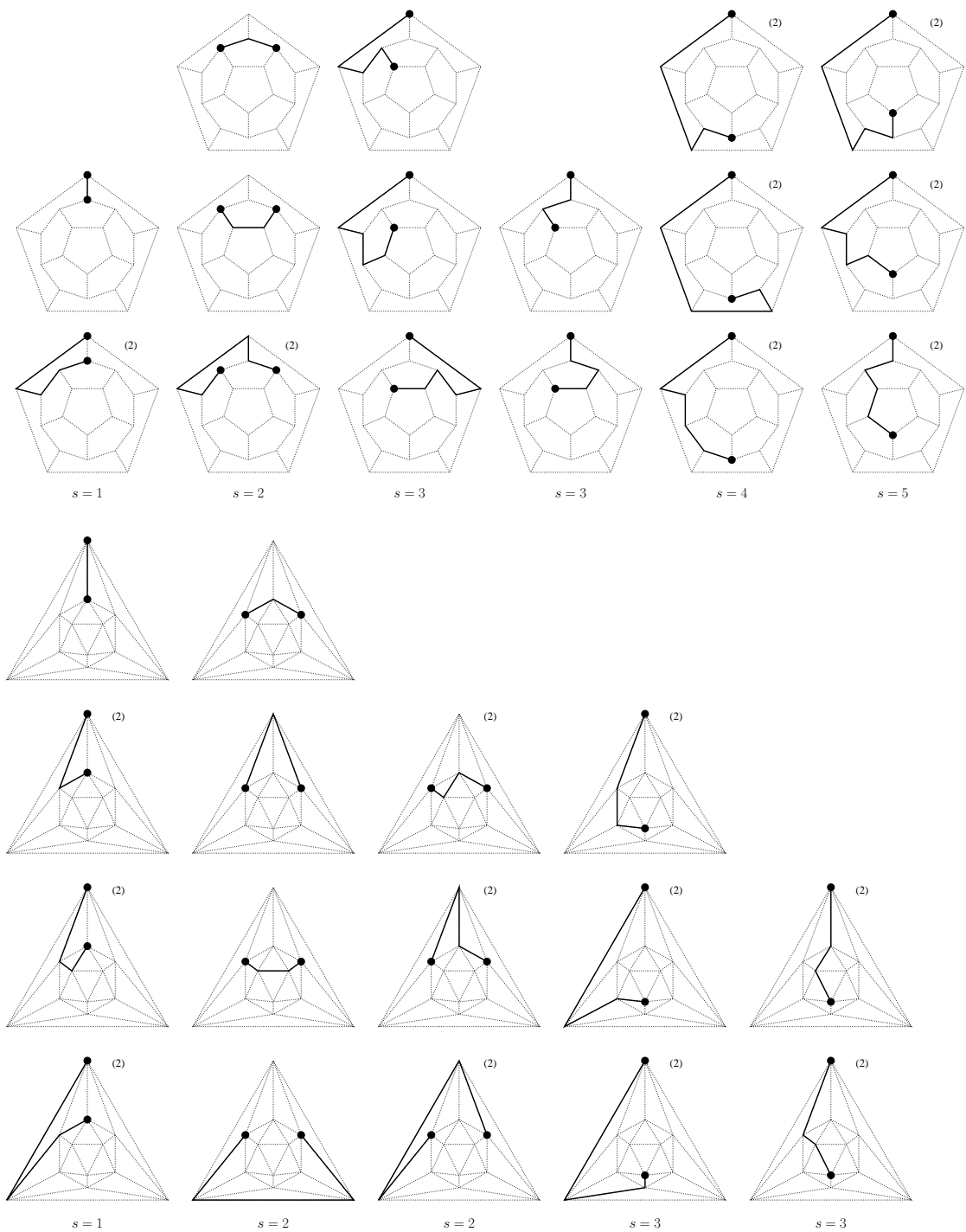}}
\caption{\upshape{Picture of the self-avoiding paths with length at most five connecting two vertices of the dodecahedron at respectively
                  distance~1, 2, 3, 4, and 5, of each other, and picture of the self-avoiding paths with length at most three connecting two
                  vertices of the icosahedron at respectively distance~1, 2, and 3, of each other.
                  The label~(2) next to some pictures means that the mirror image of the path is another path connecting the same two vertices.}}
\label{fig:path-12-20}
\end{figure}
\begin{figure}[t!]
\centering
\scalebox{0.60}{\input{poly-12-20.pstex_t}}
\caption{\upshape{Coefficients returned by algorithm~\eqref{eq:algorithm} for the dodecahedron (left) and the icosahedron (right) using the self-avoiding paths
                  represented in Figure~\ref{fig:path-12-20}.
                  The last column of each table is equal to the linear combination of the other columns with weight given by the value of the~$N_s$ in
                  the second row.
                  Because we only look at a subset of the self-avoiding paths connecting two vertices the last column now gives the coefficients of a
                  polynomial in~$p$ that is smaller than the first moment of the cluster size.}}
\label{fig:poly-12-20}
\end{figure}
 For the dodecahedron and the icosahedron, not only the sum~\eqref{eq:inclusion-exclusion-4} cannot be computed by hand, but also the number
 of self-avoiding paths connecting two vertices is beyond human capability.
 However, we can find lower bounds for the mean cluster size by only taking into account a subset of paths.
 More precisely, given~$x \neq y$, and letting
\begin{itemize}
 \item $\pi_1, \pi_2, \ldots, \pi_J$ be the self-avoiding paths of length~$\leq c$ connecting~$x$ and~$y$, \vspace*{2pt}
 \item $\pi_{J + 1}, \pi_{J + 2}, \ldots, \pi_K$ be the self-avoiding paths of length~$> c$ connecting~$x$ and~$y$,
\end{itemize}
 we deduce from~\eqref{eq:inclusion-exclusion-3} that
\begin{equation}
\label{eq:inclusion-exclusion-6}
  \begin{array}{l}
    P (x \leftrightarrow y) =
         \displaystyle \sum_{B \subset [K] : B \neq \varnothing} \ (- 1)^{\card (B) + 1} \ p^{\card \!\! \big(\bigcup_{i \in B} \pi_i \big)} \vspace*{0pt} \\ \hspace*{100pt} \geq
         \displaystyle \sum_{B \subset [J] : B \neq \varnothing} \ (- 1)^{\card (B) + 1} \ p^{\card \!\! \big(\bigcup_{i \in B} \pi_i \big)}. \end{array}
\end{equation}
 The inequality follows from an inclusion of events:
 if at least one of the first~$J$ paths is open then at least one of the~$K$ paths is open. 
 For both the dodecahedron and the icosahedron, we choose the cutoff~$c$ to be the radius of the graph, meaning that we only consider self-avoiding paths
 with length at most five for the dodecahedron and self-avoiding paths with length at most three for the icosahedron.
 These paths are drawn in Figure~\ref{fig:path-12-20}.
 Because both graphs have thirty edges, the right-hand side of~\eqref{eq:inclusion-exclusion-6} is a polynomial with degree at most~30.
 Fixing a labeling of the edges for both graphs to turn the self-avoiding paths into subsets of~$\{0, 1, \ldots, 29 \}$, and using these
 subsets as inputs, our program returns the values shown in the first table
 of Figure~\ref{fig:poly-12-20} for the dodecahedron and the values shown in the second table for the icosahedron.
 As previously, multiplying each column by the appropriate~$N_s$ listed in the first row of each table gives the coefficients of the polynomial
 on the right-hand side of~\eqref{eq:inclusion-exclusion-6}, which completes the proof of Theorem~\ref{th:12-20}.
 These polynomials have degree less than~30 because we only take into account the shortest self-avoiding paths.
\end{proofof} \\ \\
 In conclusion, using the inclusion-exclusion identity and independence, we proved that computing the expected value of the size of an open cluster
 reduces to finding all the self-avoiding paths connecting two vertices at distance~$1, 2, \ldots, r$ apart.
 Whenever finding all these paths is possible like for the tetrahedron, the cube and the octahedron, our program returns the exact value of the
 coefficients of the polynomial representing the expected value.
 When finding all the paths is not possible like for the dodecahedron and the icosahedron, one can still obtain lower bounds by only looking at
 a subset of self-avoiding paths.

%%%%%%%%%%%%%%%%%%%%%%%%%%%%%%%%%%%%%%%%%%%%%%%%%%%%%%%%%%%%%%%%%%%%%%%%%%%%%%%%%%%%%%%%%%%%%%%%%%%%%%%%%%%%%%%%%%%%%%%%%%%%%%%%%%%%%%%%%%%%%%%%%%%%%%%%%%%%%%%%%%%%%%%%%

\bibliographystyle{plain}
\bibliography{lanchier_lasalle_2020.bib}

\begin{thebibliography}{10}

\bibitem{aizenman_kesten_newman_1987a}
M.~Aizenman, H.~Kesten, and C.~M. Newman.
\newblock Uniqueness of the infinite cluster and continuity of connectivity
  functions for short and long range percolation.
\newblock {\em Comm. Math. Phys.}, 111(4):505--531, 1987.

\bibitem{aizenman_kesten_newman_1987b}
M.~Aizenman, H.~Kesten, and C.~M. Newman.
\newblock Uniqueness of the infinite cluster and related results in
  percolation.
\newblock In {\em Percolation theory and ergodic theory of infinite particle
  systems ({M}inneapolis, {M}inn., 1984--1985)}, volume~8 of {\em IMA Vol.
  Math. Appl.}, pages 13--20. Springer, New York, 1987.

\bibitem{ajtai_komlos_szemeredi_1982}
M.~Ajtai, J.~Koml\'{o}s, and E.~Szemer\'{e}di.
\newblock Largest random component of a {$k$}-cube.
\newblock {\em Combinatorica}, 2(1):1--7, 1982.

\bibitem{alon_benjamini_stacey_2004}
N.~Alon, I.~Benjamini, and A.~Stacey.
\newblock Percolation on finite graphs and isoperimetric inequalities.
\newblock {\em Ann. Probab.}, 32(3A):1727--1745, 2004.

\bibitem{broadbent_hammersley_1957}
S.~R. Broadbent and J.~M. Hammersley.
\newblock Percolation processes. {I}. {C}rystals and mazes.
\newblock {\em Proc. Cambridge Philos. Soc.}, 53:629--641, 1957.

\bibitem{erdos_renyi_1959}
P.~Erd\H{o}s and A.~R\'{e}nyi.
\newblock On random graphs. {I}.
\newblock {\em Publ. Math. Debrecen}, 6:290--297, 1959.

\bibitem{grimmett_1999}
G.~Grimmett.
\newblock {\em Percolation}, volume 321 of {\em Grundlehren der Mathematischen
  Wissenschaften}.
\newblock Springer-Verlag, Berlin, second edition, 1999.

\bibitem{jevtic_lanchier_2020}
P.~Jevti\'{c} and N.~Lanchier.
\newblock Dynamic structural percolation model of loss distribution for cyber
  risk of small and medium-sized enterprises for tree-based {LAN} topology.
\newblock {\em Insurance Math. Econom.}, 91:209--223, 2020.

\bibitem{jevtic_lanchier_lasalle_2020}
P.~Jevti\'{c}, N.~Lanchier, and A.~La~Salle.
\newblock First and second moments of the size distribution of bond percolation
  clusters on rings, paths and stars.
\newblock {\em Statist. Probab. Lett.}, 161:108714, 6, 2020.

\bibitem{kesten_1980}
H.~Kesten.
\newblock The critical probability of bond percolation on the square lattice
  equals {${1\over 2}$}.
\newblock {\em Comm. Math. Phys.}, 74(1):41--59, 1980.

\bibitem{Lanchier_2017}
N.~Lanchier.
\newblock {\em Stochastic modeling}.
\newblock Universitext. Springer, Cham, 2017.

\bibitem{menshikov_1986}
M.~V. Menshikov.
\newblock Coincidence of critical points in percolation problems.
\newblock {\em Dokl. Akad. Nauk SSSR}, 288(6):1308--1311, 1986.

\end{thebibliography}

\end{document}